\begin{document}

\parskip 1ex            

\parindent 5ex		

\renewcommand{\baselinestretch}{1.2}

\newtheorem{theorem}{Theorem}[section]

\newtheorem{lemma}[theorem]{Lemma}

\newtheorem{conjecture}[theorem]{Conjecture}

\newtheorem{proposition}[theorem]{Proposition}

\newtheorem{corollary}[theorem]{Corollary}

\newtheorem{claim}[theorem]{Claim}

\newtheorem{property}{Property}

\newtheorem{definition}{Definition}

\newcommand\mc{\mathcal}

\newcommand\up{\uparrow}

\newcommand{\of}[1]{\left( #1 \right) }

\def\a{\alpha} \def\b{\beta} \def\d{\delta} \def\D{\Delta}

\def\e{\epsilon} \def\f{\phi} \def\F{{\Phi}} \def\vp{\varphi} \def\g{\gamma}

\def\G{\Gamma} \def\i{\iota} \def\k{\e/2} \def\K{\Kappa}

\def\z{\zeta} \def\th{\theta} \def\TH{\Theta} \def\Th{\Theta}  \def\l{\lambda}

\def\La{\Lambda} \def\m{\mu} \def\n{\nu} \def\p{\pi}

\def\r{\rho} \def\R{\Rho} \def\s{\sigma} \def\S{\Sigma}

\def\t{\tau} \def\om{\omega} \def\OM{\Omega} \def\Om{\Omega}\def\U{\Upsilon} \def\l{\ell}

\def\P{\mathbb{P}}

\def\E{\mathbb{E}}

\newcommand\mb{\mathbb}

\newcommand{\tcb}[1]{\textcolor{blue}{#1}}

\title{A note on the random greedy independent set algorithm}

\author{
Patrick Bennett\thanks{Department of Mathematical Sciences, Carnegie Mellon
University, Pittsburgh, PA 15213, USA. Email: {\tt ptbennet@andrew.cmu.edu}.
Research supported in part by NSF grants DMS-1001638 and DMS-1362785.}
\and
Tom Bohman\thanks{Department of Mathematical Sciences, Carnegie Mellon
University, Pittsburgh, PA 15213, USA. Email: {\tt tbohman@math.cmu.edu}.
Research supported in part by NSF grant DMS-1001638.}
}

\date{}

\maketitle

\begin{abstract}

Let $r\ge 3$ be a fixed constant and let 
$ {\mathcal H}$ be an $r$-uniform,
$D$-regular hypergraph on $N$ vertices.  
Assume further that $ D > N^\e $ for some $ \e>0 $.
Consider the random greedy algorithm for forming an independent
set in $ \mc{H}$.  An independent set is chosen at random
by iteratively 
choosing vertices at random to be in the 
independent set.  At each step we chose a vertex uniformly at random
from the collection of vertices that could be added to the independent 
set (i.e. the collection of vertices $v$ with the property that $v$ is not in the current independent set $I$ and $ I \cup \{v\}$ contains no edge in $ \mc{H}$).  Note that this process
terminates at a maximal subset of vertices with the property
that this set contains no edge of $ \mc{H} $; that is, the process
terminates at a maximal independent set.

We prove that if $ \mc{H}$ satisfies certain degree and codegree conditions then 
there are
$ \Omega\left( N \cdot ( (\log N) / D )^{\frac{1}{r-1}} \right) $
vertices in the independent set produced by the random greedy algorithm with 
high probability.  This result generalizes 
a lower bound on the number of steps in the $ H$-free process
due to Bohman and Keevash and produces objects of 
interest in additive combinatorics.
\end{abstract}

\section{Introduction}

Consider the random greedy algorithm for finding a maximal independent
set in a hypergraph.  Let $ {\mathcal H} $ be a hypergraph on vertex
set $V$. (I.e. ${\mathcal H}$ is a collection of subsets of $V$.  The sets in this
collection are the {\em edges} of $\mc{H}$).  An {\em independent set} in $\mc{H}$ 
is a set $ I \subseteq V $ such that
$I$ contains no edge of $ {\mathcal H} $.  The random greedy algorithm forms a maximal
independent set in $ {\mathcal H}$ by iteratively choosing vertices at random to be vertices
in the independent set.  To be precise, we begin with $ {\mathcal H}(0) = {\mathcal H}$, $ V(0) = V$ 
and $ I(0) = \emptyset $.  Given independent set $I(i)$ and 
hypergraph $ \mc{H}(i) $ on vertex set $V(i)$, a vertex $v \in V(i)$ is chosen uniformly at random
and added to $ I(i)$ to form $ I(i+1) $.  The vertex set $V(i+1)$ is set equal to
$ V(i)$ less $v$ and every vertex $u$ such that 
the pair $ \{u,v\} $ is an edge of $ \mc{H}(i) $.  Finally the
hypergraph $ \mc{H}(i+1) $ is formed from $ \mc{H}(i) $  by 
\begin{enumerate}
\item removing $v$ from every edge in $ \mc{H}(i) $ that contains $v$ and at least 2 other vertices, 
\, and
\item removing every edge that contains a vertex $u$ 
such that the pair $ \{u,v\} $ is an edge of $ \mc{H}(i) $.
\end{enumerate}
The process terminates when $ V(i) $ is empty.  At this point $I(i)$ is a maximal independent set
in $\mc{H}$.

A number of problems in combinatorics can be stated in terms of
maximal independent sets in hypergraphs.  In some of these situations, the 
random greedy algorithm produces such an
independent set with desirable properties.  For example, the best known lower 
bounds on the Tur\'an numbers of some bipartite graphs as well as the best known
lower bound on the off-diagonal graph Ramsey numbers $ R(s,t) $ (where $s \ge 3$ is fixed and
$t$ is large) are given by objects produced by this
algorithm.  In these two cases the objects of interest are produced 
by an instance of the random greedy
independent set algorithm known as the $H$-free process.  Here
we let $H$ be a fixed 2-balanced graph (e.g. $ K_\l $) and consider
the hypergraph $\mc{H}_H$ that has vertex set $V = \binom{[n]}{2} $, i.e.
the edge set of the complete graph $K_n$, and 
edge set consisting of all copies of $H$ in $K_n$.    Note that in this 
context the random greedy independent set algorithm produces a graph on vertex set
$[n]$ (i.e. a 
subset of $ \binom{[n]}{2}$) that contains no copy of the graph $H$.
Bohman and Keevash \cite{BK} gave an analysis of the 
$H$-free process for an arbitrary 2-balanced graph $H$ that gives a lower bound
on the number of steps in the process.  In this note we
extend that result to a more general setting.  This generalization includes
natural hypergraph variants of the $H$-free process as well as some
processes that are of interest in number theory.

Following the intuition that guides the earlier work on the $H$-free process,
our study of the random greedy independent set algorithm on a general
$D$-regular, $r$-uniform hypergraph $ \mc{H}$ on vertex set $V$ is guided by the following
question: 
\begin{quote}
To what extent does the independent set $ I(i) $ resemble a random subset $S(i)$ of $V$
chosen by simply taking $ Pr( v \in S(i)) = i/N$, independently, for all $ v \in V $?
\end{quote}
Of course, if $i$ is large enough then the set 
$ S(i)$ should contain many edges of $ \mc{H}$ while $ I(i) $ contains none.  But are 
these sets similar with respect to other statistics?  Consider, for example, the set of vertices 
$ V(i) $, which is the set of vertices that remain eligible for inclusion in the independent set.  A 
vertex $w$ (that does not lie in $ I(i)$ itself) is in this set if 
there is no edge $ e \in \mc{H} $ such that $ w \in e $ and 
$ e \setminus \{w\} \subseteq I(i) $.   If $ I(i) $ resembles
$S(i)$ then the number of vertices that have this property should be roughly
\[ |V| \left( 1 -  \left(\frac{i}{N} \right)^{r-1} \right)^D \approx 
N \exp \left\{ - D  \left(\frac{i}{N} \right)^{r-1} \right\}. \]
If this is indeed the case we would expect the algorithm to continue until
\[  D  \left( \frac{i}{N} \right)^{r-1}  = \Omega( \log N ). \]
Our main result is that if $ \mc{H}$ satisfies certain 
degree and codegree conditions this is indeed the case.  And in the course of proving this result
we establish a number of other similarities of $I(i)$ and $ S(i) $.

Define the {\em degree} of a set $ A \subset V$ to be the number of edges of 
$ {\mathcal H} $ that contain $A$.
For $ a = 2, \dots, r-1 $ we define $ \Delta_a( {\mathcal H}) $
to be the maximum degree of $A$ over $A \in \binom{V}{a} $.  We also
define the {\em $(r-1)$-codegree} of a pair of distinct vertices $ v,v'$ to be 
the number of edges $e,e' \in {\mathcal H} $ such that $ v \in e \setminus e', v' \in e' \setminus e$ 
and $ |e \cap e'|=r-1 $.  We let $ \G ( \mc{H}) $ be the maximum 
$(r-1)$-codegree of $ {\mathcal H} $.
\begin{theorem}
\label{theory:main}
Let $r\ge 3$ and $ \epsilon > 0 $ be fixed.  Let 
$ {\mathcal H} $ be a $ r$-uniform, $D$-regular hypergraph on $N$ 
vertices such that $ D > N^{\epsilon} $.  If
\begin{equation}
\label{eq:degcond}
 \Delta_\ell( {\mathcal H} ) < D^{ \frac{r-\ell}{r-1} - \epsilon} \ \ \ \text{ for }
\ell = 2, \dots, r-1 
\end{equation}
and $ \Gamma( {\mathcal H} ) < D^{1- \epsilon} $
then the random greedy independent set
algorithm produces an independent set $I$ in $ {\mathcal H} $ with
\begin{equation}
\label{eq:lowerbound}
|I| = \Omega\left( N \cdot \left( \frac{\log N}{ D} \right)^{\frac{1}{r-1}} \right)
\end{equation}
with probability $ 1 - \exp\left\{ - N^{\Omega(1)} \right\} $.
\end{theorem}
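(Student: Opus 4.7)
The plan is to analyze the process via Wormald's differential equations method, in the spirit of Bohman--Keevash's analysis of the $H$-free process, tracking a collection of random variables whose trajectories coincide with what the heuristic $I(i) \sim S(i)$ predicts. Let $t = i/N$ be the rescaled time. The heuristic predicts $|V(i)| \approx N \exp\{-D t^{r-1}\}$ (since a vertex $v$ survives iff for every edge $e \ni v$, not all of the other $r-1$ vertices are in $I$, which happens with probability $\approx (1 - t^{r-1})^{D}$). So the target is to show the process runs until some $i^\star = \Theta\bigl(N (\log N / D)^{1/(r-1)}\bigr)$, at which point $|V(i^\star)|$ is still $\ge 1$ and thus $|I| \ge i^\star$.

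The variables I would track are $|V(i)|$ together with, for each $v \in V(i)$ and each $a \in \{0,1,\ldots,r-2\}$, the quantity $D_a(v,i)$ defined as the number of edges $e \in \mathcal{H}$ with $v \in e$, $|e \cap I(i)| = a$, and $e \setminus (\{v\} \cup I(i)) \subseteq V(i)$ (i.e. edges in which $v$ and some $r-1-a$ other active vertices remain). The conjectured trajectory is $D_a(v,i) \approx D \binom{r-1}{a} t^a (1-t^{r-1})^D$ (in appropriately rescaled form). These are the relevant quantities because a vertex $v$ is killed at step $i+1$ iff the vertex $w$ chosen at that step satisfies $D_{r-2}(v,i)$-type condition with $v$, i.e., iff $\{v,w\}$ is a current $2$-edge.

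The core of the argument is the one-step analysis. Conditioning on the history at step $i$, the chosen vertex $w$ is uniform in $V(i)$, so the expected decrease in $|V(i)|$ from killing is $1 + \frac{1}{|V(i)|}\sum_{v} D_{r-2}(v,i)$, which after substituting trajectory values yields a difference equation matching the derivative of the heuristic curve. For $D_a(v,i)$ the analysis is similar: edges transition $a \to a{+}1$ when $w$ lies in one of the $r-1-a$ active vertices of such an edge, and edges are destroyed altogether when $w$ kills one of those vertices via a 2-edge elsewhere. Here the degree conditions enter crucially: $\Delta_\ell(\mathcal{H}) < D^{(r-\ell)/(r-1)-\epsilon}$ bounds how many edges around $v$ can share vertices (and thus transition together), while $\Gamma(\mathcal{H}) < D^{1-\epsilon}$ controls the expected number of "collateral" edges through $v$ destroyed via a 2-edge at some $u$. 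Both bounds ensure the expected one-step change matches the trajectory within an error of at most $D^{-\Omega(\epsilon)}$ times the main term.

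To conclude I would use a standard stopping-time bootstrap: let $T$ be the first step at which any tracked random variable deviates from its trajectory by more than a prescribed error $f(t)$ (itself chosen slightly larger than the cumulative martingale fluctuation). Write each tracked quantity as a martingale after subtracting the trajectory, and apply Freedman's inequality (or Azuma--Hoeffding with a crude bounded-differences estimate) step by step; each single-variable concentration bound gives a tail of $\exp\{-D^{\Omega(\epsilon)}\} = \exp\{-N^{\Omega(1)}\}$, and a union bound over the at most $N \cdot (\text{poly})$ variables and $i \le i^\star$ steps preserves this probability. The main obstacle is the bootstrapping itself: showing that while $i < T$ one has good enough control on the one-step change of every tracked variable, which in turn requires that up to time $T$ the quantities $D_a(v,i)$ remain near their trajectory values so that the maximum one-step change of $|V(i)|$ and of each $D_a(v,i)$ is tightly bounded in terms of $\Delta_\ell$ and $\Gamma$. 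Once the bootstrap closes, $T > i^\star$ w.h.p., and since $|V(i^\star)| \approx N \exp\{-D(i^\star/N)^{r-1}\} \ge N^{\Omega(1)} > 0$ on this event, the process has run at least $i^\star$ steps, yielding \eqref{eq:lowerbound}.
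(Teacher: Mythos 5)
Your overall strategy---Wormald-style DEM with a stopping time, per-vertex tracking of edge counts by how many of their vertices have entered $I$, and Freedman's inequality to close a bootstrap---is exactly the framework of the paper's Section~4. But there is a genuine gap in how you propose to close the bootstrap. You treat the hypotheses $\Delta_\ell(\mathcal{H})<D^{(r-\ell)/(r-1)-\epsilon}$ and $\Gamma(\mathcal{H})<D^{1-\epsilon}$ as directly bounding the one-step changes of your tracked variables throughout the process. They do not. When a vertex $w$ is selected, the change $\Delta D_a(v,i)$ is governed by the number of edges in the \emph{current} reduced hypergraph $\mathcal{H}(i)$ through both $v$ and $w$, or through $v$ and some $u$ with $\{u,w\}$ a current $2$-edge; these are codegrees in $\mathcal{H}(i)$, not in $\mathcal{H}$. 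As edges shrink (their vertices enter $I$), such reduced degrees and codegrees can grow well beyond their initial values, and the hypotheses only supply the time-zero base case. The paper therefore introduces a second layer of tracked quantities---the set-degrees $d_{A\uparrow b}(i)$ for $2\le a<b\le r$ and the codegrees $c_{a,a'\to k}(v,v',i)$---and proves crude (non-sharp) upper bounds on them uniformly up to the stopping time, by reverse induction on edge size with its own martingale argument at each stage (Lemmas~\ref{dlemma} and~\ref{clemma}). Without this layer, you have no uniform bound on the increments of $D_a(v,i)$ or $|V(i)|$, and the Freedman/Azuma step does not go through: the bootstrap never closes.

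Two smaller issues. Your conjectured trajectory $D_a(v,i)\approx D\binom{r-1}{a}t^a(1-t^{r-1})^D$ is incorrect for $a<r-2$: a counted edge must have its remaining $r-1-a$ non-$I$ vertices \emph{all} still open, so the correct form is $D\binom{r-1}{a}p^a q^{r-1-a}$ with $q\approx e^{-Dp^{r-1}}$, not a single factor of $q$; your formula agrees with the paper's only in the case $a=r-2$. Also, the paper decomposes each $d_\ell(v)$ into its cumulative gain and loss components $d_\ell^+(v)-d_\ell^-(v)$ and tracks those two nonnegative quantities separately; this one-sidedness is what lets the supermartingale inequalities be applied cleanly, and while not strictly mandatory it is a concrete device your sketch does not anticipate.
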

\noindent
The proof of Theorem~\ref{theory:main} is given in Section~\ref{sec:lower}.
Similar lower bounds on the independence numbers of $r$-uniform hypergraphs
were established by Ajtai, Koml\'os, Pintz, Spencer and Szemer\'edi \cite{akpss} and
extended by Duke, Lefmann and R\"odl \cite{dlr}.  However, these results are restricted to the
case of simple hypergraphs (i.e. $ \Delta_2( \mathcal{H}) = 1 $).  We note that some condition 
beyond a vertex-degree condition is required to ensure existence of an independent set
of size $ \omega( N/ D^{\frac{1}{r-1}}) $; for example, Cooper and Mubayi \cite{CM} 
recently gave a construction of a
$ K_4^- $-free 3-uniform hypergraph with independence number bounded above by $ 2N/ \Delta^{1/2} $, where
$ \Delta $ denotes maximum degree.

Consider the $H$-free process, where $H$ is a graph with vertex set $V_H$ 
and edge set $ E_H$.  Set $ v_H = | V_H| $ and $ e_H = |E_H| $.  Recall that $H$ is strictly 2-balanced if and only if
\begin{equation}
\label{eq:2balance}  \frac{e_{H[W]} - 1}{ |W| - 2 } < \frac{ e_H - 1}{ v_H -2} \ \ \ \ \ 
\text{ for all } W \subsetneq V_H \text{ such that } |W| \ge 3,
\end{equation}
where $H[W]$ is the subgraph of $H$ induced by $W$.  For $ a \ge 2$ let $ v_a $ be the minimum number of vertices of $H$ spanned by a set of $a$ edges of $H$.  We have
\[  \Delta_a( \mc{H}_H ) = \Theta \left( n^{ v_H - v_a} \right) = \Theta \left( n^{ (v_H-2) \left[ 1 - \frac{ v_a -2}{ v_H -2} \right]} \right) \]
and
\[ D^{ \frac{ e_H-a}{ e_H-1}} = \Theta \left( n^{ \frac{ ( v_H-2)( e_H -a)}{ e_H-1}} \right)
= \Theta \left( n^{ (v_H-2) \left[ 1 - \frac{ a -1}{ e_H -1} \right]}  \right). \]
Thus, we see that $ \mc{H}_H $ satisfies (\ref{eq:degcond}) if and only  $(v_a-2)/(v_H-2) > (a-1)/( e_H-1)$ for all $ a \ge 2$, which
holds if and only if $H$ is strictly 2-balanced.
Thus Theorem~\ref{theory:main} is a generalization of the lower bound on the number of steps in the $H$-free process for $H$ strictly 2-balanced given by Bohman and Keevash \cite{BK}.

We note that the hypotheses of Theorem~\ref{theory:main} imply that $\mc{H}$ is not too dense. 
Specifically, counting $ | \mc{H}|$ in two ways we have
$$ \frac{ND}{r} = | \mc{H} | \le \frac{1}{ \binom{r}{2}} \binom{N}{2} D^{ \frac{r-2}{r-1} - \epsilon}.$$ 
This implies that we have
\begin{equation}
\label{eq:notdense}
N = \Omega \left( D^{\frac{1}{r-1} + \epsilon} \right).
\end{equation}
We make use of this observation in the proofs below.

Some processes for which the degree and codegree conditions 
in Theorem~\ref{theory:main} are relaxed have already been studied.  A {\em diamond} is the graph 
obtained by removing an edge from $K_4$.  The diamond-free process studied
by Picollelli \cite{P1} is an example of an $H$-free process where the graph $H$ is 
2-balanced but not strictly 2-balanced.  When $ H$ is a diamond then the hypergraph $ \mc{H}_H$ is
5-uniform and $ 5(n-2)(n-3)/2 $-regular but has $ 
\Delta_3( \mc{H}_H ) = 3(n-3) = \Theta( D^{1/2} ) $.  For this process Picollelli \cite{P1}
shows that the number of steps is larger than the bound given by (\ref{eq:lowerbound}) 
by a logarithmic factor.  Bennett \cite{pb} has recent results on the sum-free process.
This process is the random greedy independent set algorithm on the hypergraph which has vertex
set $ \mathbb{Z}_n $ and edge set consisting of all solutions of the equations $ a+b=c$.  This
hypergraph does not satisfy the codegree condition in Theorem~\ref{theory:main}. Since $ a+b =c $ 
implies $ (-a) + c =b$, the 2-codegree of $a$ and $-a$ has the same order as the degree $D$ of the 
hypergraph.  Nevertheless, the lower bound (\ref{eq:lowerbound}) still holds for the 
sum-free process.  In both of
these processes, interesting irregularities in $ \mc{H}(i) $ (i.e. violations of our intuition that
$ S(i)$ should resemble $I(i)$)  develop as the process evolves.



It is tempting to speculate that the lower bound in Theorem~\ref{theory:main} 
gives the correct order of magnitude of the maximal independent set produced 
by the random greedy independent set algorithm for a broad class of hypergraphs $ \mc{H}$.  
Bohman and Keevash conjecture that this
is the case for the $H$-free process when $H$ is strictly 2-balanced, but even this remains widely open.  The conjecture has
been verified in some special cases, including the $ K_3$-free process \cite{r3t}, the $K_4$-free process \cite{Wa3,Wz2}
and the $ C_\ell$-free process for all $ \ell \ge 4 $ \cite{P2,P3,Wa2}.

In the interest of communicating a short and versatile proof, we make 
no attempt to optimize (or even explicitly state) the constant in the lower
bound (\ref{eq:lowerbound}).  For the same reason, we also refrain 
from considering hypergraphs that are only approximately $D$-regular.  (It is clear that some
loosening of the regularity condition in Theorem~\ref{theory:main} is possible.)  Our proof uses the so-called differential equations
method for establishing dynamic concentration and is a  
modest simplification of the
earlier work of Bohman and Keevash.  We do not establish self-correcting 
estimates, which are dynamic concentration inequalities with error bounds that improve as the underlying
process evolves.  Such estimates were first deployed by Telcs, Wormald and Zhou \cite{TWZ} (and, independently, in \cite{BP}).  Bohman, Frieze and Lubetzky \cite{bfl} developed a {\em critical interval}
method for proving self-correcting estimates.  Very recently, the critical interval method (and closely related methods) have been used to give a
very detailed analysis of the triangle-removal process \cite{BFL2} and to determine
the asymptotic number of edges in the $K_3$-free process \cite{BK2, FGM}.  The latter gives an 
improvement on Kim's \cite{K} celebrated lower bound
on the Ramsey number $R(3,t)$.

Of course, the cardinality of the maximal independent set produced by the random greedy algorithm is not the
only quantity of interest.  We would also like to understand some of the structural
properties of this set; in particular, what other properties of the binomial random 
set $ S(i)$ are shared by $ I(i)$?  





For example, the lower bounds on $ R(s,t) $ mentioned above follow from the fact that the independence number
of the graph produced by the $ K_s$-free process is essentially the same as the independence
number of the corresponding $ G_{n,p}$.  There has been extensive study of the number of copies of
a fixed graph $K$ that does not contain $H$ as a subgraph in the graph produced by the $H$-free 
process \cite{BK, GM, Wa1, Wz3}.  
It turns out that the number of copies of such a graph $K$ is
roughly the same as in the corresponding $ G_{n,p}$.  Our next result is an extension of this
fact to our general hypergraph setting.

Fix $s$ and a $s$-uniform hypergraph $ {\mathcal G} $ on vertex set $V$ (i..e the 
same vertex set as the hypergraph $\mc{H}$).  We let $ X_\mc{G}(i)$ be the number of edges in $\mc{G}$ 
that are contained in the independent set produced at the $i^{th}$ step of the random greedy process on $ \mc{H} $.  Set
$ p = p(i) = i/N$ and let $ i_{\rm max} $ be the lower bound (\ref{eq:lowerbound}) on the size 
of the independent set given by the 
random greedy algorithm given in Theorem~\ref{theory:main}.

\begin{theorem}
\label{theory:count}
If no edge of $ \mc{G}$ contains an edge of $ \mc{H}$, $ i < i_{\rm max}$ is fixed, $ | \mc{G}| p^s \to \infty $ and 
$ \Delta_a( \mc{G}) = o( p^a |\mc{G}|) $ for $ a =1, \dots, s-1$ then
\[ X_\mc{G} (i) = | \mc{G}| p^{s} (1+o(1)). \]
with high probability.
\end{theorem}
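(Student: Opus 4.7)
The plan is to extend the differential-equations tracking from the proof of Theorem~\ref{theory:main} to the random variable $X_\mc{G}(j)$, showing it stays close to the trajectory $\hat{x}(j) = |\mc{G}|(j/N)^s$ for all $j$ up to $i$. This trajectory reflects the guiding heuristic that $I(j)$ resembles a binomial subset $S(j)$ of density $p = j/N$: under that model the expected number of edges of $\mc{G}$ lying inside $S(j)$ is exactly $|\mc{G}|p^s$. The hypothesis that no edge of $\mc{G}$ contains an edge of $\mc{H}$ is precisely what ensures $X_\mc{G}(i)$ is not forced to zero, so that this heuristic has a chance to hold.

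When a new vertex $v$ is sampled uniformly from $V(j)$, the increment is $\Delta X_\mc{G}(j) = |\{e \in \mc{G}: v \in e,\ e \setminus \{v\} \subseteq I(j)\}|$, so
\[ \E[\Delta X_\mc{G}(j) \mid \mc{F}_j] = \frac{Z(j)}{|V(j)|}, \]
where $Z(j)$ counts pairs $(e,v)$ with $v \in e \cap V(j)$ and $e \setminus \{v\} \subseteq I(j)$. The heuristic, combined with the estimate $|V(j)| \approx N\exp(-Dp^{r-1})$ supplied by the proof of Theorem~\ref{theory:main}, predicts $Z(j) \approx s|\mc{G}| p^{s-1} \cdot |V(j)|/N$, which matches $\hat{x}'(j) = s|\mc{G}|p^{s-1}/N$. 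To make this rigorous I would introduce a hierarchy of auxiliary variables $Y_k(j)$, $k = 0, 1, \dots, s$, counting the edges of $\mc{G}$ with exactly $s-k$ vertices in $I(j)$ and $k$ vertices in $V(j)$, with expected trajectories $\binom{s}{k} |\mc{G}| p^{s-k} (|V(j)|/N)^k$, and track this hierarchy in parallel with the variables already controlled in the proof of Theorem~\ref{theory:main}.

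Concentration at each step is handled by a Freedman- or Azuma-type supermartingale argument applied to each $Y_k$, exactly as in the main proof. The hypothesis $\Delta_a(\mc{G}) = o(p^a |\mc{G}|)$ enters when bounding the one-step variance and the contribution of overlapping pairs of edges of $\mc{G}$, while $|\mc{G}|p^s \to \infty$ is what makes a target relative error of $o(1)$ feasible. The main obstacle is closing the inductive system: the change in $Y_0 = X_\mc{G}$ depends on $Y_1$, whose change depends on $Y_1$ and $Y_2$, and so on, so the error bars for the $Y_k$ must be tuned so that control of higher $k$ propagates down to $X_\mc{G}$ while still giving $(1+o(1))$ at time $i$. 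Since we only need accuracy at the single fixed time $i < i_{\rm max}$ rather than uniform control throughout the process, the error budget is forgiving and the system can be closed by a direct adaptation of the martingale framework used for Theorem~\ref{theory:main}.
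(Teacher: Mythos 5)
Your proposal takes a genuinely different route from the paper. The paper does not track $X_\mc{G}$ (or any hierarchy $Y_k$) with a martingale/differential-equations argument; it instead proves a Wolfovitz-style lemma showing that for a fixed set of $L$ vertices $\{v_1,\dots,v_L\}$ spanning no edge of $\mc{H}$, one has $\P(\{v_1,\dots,v_L\}\subseteq I(j)) = (j/N)^L(1+o(1))$ (this uses the stopping time and the dynamic concentration already established in Theorem~\ref{theory:main}), and then finishes by linearity of expectation and a second-moment/Chebyshev computation. The hypotheses $|\mc{G}|p^s\to\infty$ and $\Delta_a(\mc{G})=o(p^a|\mc{G}|)$ are exactly what the second-moment argument consumes: they make the off-diagonal contribution $\sum_{a\ge 1}\Delta_a(\mc{G})\,p^{2s-a}|\mc{G}|$ negligible compared with $|\mc{G}|^2p^{2s}$.

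Your plan has a concrete gap: the stated hypotheses are too weak to close a Freedman/Azuma-type martingale argument for $X_\mc{G}$ or the $Y_k$. Any such inequality needs the maximum one-step change $C$ to be small relative to the allowed deviation $d=o(|\mc{G}|p^s)$. But the one-step change in $X_\mc{G}$ when a vertex $v$ is picked equals the number of edges $e\in\mc{G}$ with $v\in e$ and $e\setminus\{v\}\subseteq I(j)$, which in the worst case is only bounded by $\Delta_1(\mc{G})$; the hypothesis gives $\Delta_1(\mc{G}) = o(p\,|\mc{G}|)$, and $p\,|\mc{G}|\gg p^s|\mc{G}|$ for $s\ge 2$, so $C$ is permitted to be far larger than $d$. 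Controlling the typical (as opposed to worst-case) step sizes would require tracking additional codegree-type statistics of $\mc{G}$ for which the theorem provides no hypotheses. The same issue infects each $Y_k$, which also loses edges when an open vertex is deleted from $V(j)$ without entering $I$ — a term you would need to track via the $d_2$ degrees along each open edge. Moreover, $|\mc{G}|p^s\to\infty$ is allowed to grow arbitrarily slowly, so at time $i$ the target quantity can be, say, $\log\log N$, and no differential-equations concentration over polynomially many steps yields a $(1+o(1))$-multiplicative bound there; a second-moment/Chebyshev argument, by contrast, handles that regime directly. To make your approach work you would need to strengthen the hypotheses (e.g., require $|\mc{G}|p^s$ polynomially large and suitable codegree conditions on $\mc{G}$), which would prove a weaker statement than the paper's.
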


\noindent Note that Theorem~\ref{theory:count} does not claim the conclusion holds for all $i$ simultaneously. The proof is given in Section~\ref{sec:count}.

We believe that Theorems~\ref{theory:main}~and~\ref{theory:count} may have multiple applications, 
most notably in the context of the $H$-free process where $H$ is a $k$-uniform 
hypergraph (i.e. $ k \ge 3$ and our vertex set is $ V = \binom{[n]}{k} $).   In Section~\ref{sec:turan} we discuss applications of Theorem~\ref{theory:main} to hypergraph-free processes and note
that in this context Theorem~\ref{theory:main} gives new lower 
bounds on some hypergraph Tur\'an problems.
We also outline one other application: a lower bound on the
number of steps in the $k$-AP-free process.  This process forms a $k$-AP-free subset
of $ \mathbb{Z}_N $ by adding elements chosen uniformly at random one at a time
subject to the condition that no $k$-term arithmetic progression is formed.  Details and
discussion are given in the following Section.

\section{The $k$-AP-free process and Gowers uniformity norm}

\label{sec:gowers}

In this Section we address a question regarding $k$-term arithmetic progressions and the Gowers  
uniformity norm.  The question was asked, independently, by Gowers~\cite{Gowers} and Green~\cite{Green} 
(for further
discussion see Conlon, Fox and Zhao \cite{cfz}).
The Gowers $U^d$ norm of $f : \mb{Z}_N \to \mb{R}$ is 
\begin{equation}
\label{eq:gowers}
\Vert f \Vert_{U^d} := \left[ \frac{1}{N^{d+1}} \displaystyle \sum_{x \in \mathbb{Z}_N , h \in \mathbb{Z}_N^{d}} \;\;\; \prod_{\om\in \{0,1\}^d} f \left(x + h \cdot \om \right) \right]^{1/2^d}.
\end{equation}
Given $ A \subset \mb{Z}_N $ we define a real-valued function 
$ \nu_A = \frac{ N}{|A|} 1_A $.  Motivated by the study of the relationship between
the Gowers norm and the distribution of arithmetic progressions in subsets of $ \mathbb{Z}_N$ (see Section~4 in \cite{Gowers}), Gowers and Green ask
if there exists a function $ s(k)$ such that $ \| \nu_A -1 \|_{U^{s(k)}} = o(1) $ implies that $A$ contains a $k$-term arithmetic progression.

Consider the $k$-AP-free process on $\mathbb{Z}_N$ (the set of integers modulo $N$), where $N$ is prime. This process is an instance of the random greedy independent set algorithm on the $k$-uniform, $k(N-1)$-regular hypergraph $\mc{H}_k$ which has vertex set $ \mb{Z}_N $ and edge set consisting of all $k$-term arithmetic progressions.  We apply Theorems~\ref{theory:main}~and~\ref{theory:count} to prove the following.

\begin{corollary}

\label{cor:jacob}

Let $k \ge 3$ and $d$ be fixed integers such that $ 2^{d-1} = k-1 $.  Let $N$ be prime.
With high probability the $k$-AP-free process produces a set $I(i_{\rm max}) \subseteq \mathbb{Z}_N $ such that
\[ \| \nu_I -1 \|_{U^{d}} =o(1). \]
\end{corollary}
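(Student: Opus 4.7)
The plan is to expand $\|\nu_I - 1\|_{U^d}^{2^d}$ by multilinearity and show each ``Gowers cube correlation'' is asymptotically equal to its value on a random set of density $p = |I|/N$. Define
\[
T_S := \frac{1}{N^{d+1}} \sum_{x, h_1, \dots, h_d \in \mathbb{Z}_N} \prod_{\omega \in S} \nu_I(x + h \cdot \omega) \qquad \text{for } S \subseteq \{0,1\}^d,
\]
so that expanding $\prod_{\omega}(\nu_I - 1)(x + h\cdot \omega)$ yields
$\|\nu_I - 1\|_{U^d}^{2^d} = \sum_{S} (-1)^{2^d - |S|} T_S$.
Since $\sum_{S \subseteq \{0,1\}^d} (-1)^{2^d - |S|} = 0$, it suffices to show $T_S = 1 + o(1)$ w.h.p.\ for every $S$, and then the $2^d$-th root gives $\|\nu_I - 1\|_{U^d} = o(1)$. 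Observe that Theorem~\ref{theory:main} applies to the $k$-uniform hypergraph $\mc{H}_k$ of $k$-APs on $\mathbb{Z}_N$, which is $k(N-1)$-regular with $\Delta_\ell(\mc{H}_k), \Gamma(\mc{H}_k) = O_k(1)$; hence $|I(i_{\max})| = i_{\max}$, and $p := i_{\max}/N = \Theta((\log N/N)^{1/(k-1)})$.

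For $|S| \leq 1$ we have $T_S = 1$ identically (from $\sum_x \nu_I(x) = N$). For $|S| \geq 2$, write $T_S = p^{-|S|} N^{-(d+1)} \tilde N_S(I)$, where $\tilde N_S(I)$ counts tuples $(x, h) \in \mathbb{Z}_N^{d+1}$ with every $y_\omega := x + h \cdot \omega \in I$ for $\omega \in S$. Stratify these tuples into:
(i) \emph{generic} --- the $y_\omega$'s are distinct and $\{y_\omega\}$ contains no $k$-AP;
(ii) \emph{degenerate} --- some $y_\omega = y_{\omega'}$;
(iii) \emph{bad} --- the $y_\omega$'s are distinct but $\{y_\omega\}$ contains a $k$-AP.
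Category (iii) contributes $0$ because $I$ is $k$-AP-free. Category (ii), further stratified by the partition of $S$ induced by the collisions, lies on a codimension-$c \geq 1$ subvariety of $\mathbb{Z}_N^{d+1}$ and contributes $O(N^{d+1-c} p^{|S| - \delta})$ per stratum, where $\delta$ is the number of collisions; these corrections are $o(N^{d+1} p^{|S|})$ because $Np \to \infty$ and $|S|/(k-1) \leq 2^d/2^{d-1} = 2$.

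For category (i), define the $|S|$-uniform hypergraph $\mc{G}_S$ on $\mathbb{Z}_N$ whose edges are the $|S|$-subsets $\{y_\omega\}$ arising from generic tuples. Each such edge is hit by a bounded number of tuples; split $\mc{G}_S$ into classes $\mc{G}_{\mc{C}}$ by the affine symmetry type of the edge in $\mathbb{Z}_N$, and apply Theorem~\ref{theory:count} to each class. For the principal class $\mc{C}_0$ (no extra $\mathbb{Z}_N$-affine symmetry), every edge is hit by the same number $m_S$ of parametrizations --- the order of the affine automorphism group of $S \subseteq \{0,1\}^d$ --- so the contribution to $\tilde N_S(I)$ equals $m_S X_{\mc{G}_{\mc{C}_0}}(I)$. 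Theorem~\ref{theory:count} applies: no edge contains a $k$-AP by construction; $|\mc{G}_{\mc{C}_0}| \, p^{|S|} = \Theta(N^{d+1} p^{|S|}) \geq \Theta(N^{d-1}(\log N)^2) \to \infty$ using $|S|/(k-1) \leq 2$; and the degrees satisfy $\Delta_a(\mc{G}_{\mc{C}_0}) = O(N^{d+1-a}) = o(p^a |\mc{G}_{\mc{C}_0}|)$ since $Np \to \infty$. Theorem~\ref{theory:count} then gives $m_S X_{\mc{G}_{\mc{C}_0}}(i_{\max}) = N^{d+1} p^{|S|}(1+o(1))$ w.h.p. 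Non-principal classes live in codimension $\geq 1$ in the parameter space, so their contribution is at most $O(N^d p^{|S|})$ --- either by another application of Theorem~\ref{theory:count} to the codimension-$1$ strata or by the trivial bound for deeper strata --- and is therefore $o(N^{d+1} p^{|S|})$. Summing over $\mc{C}$ yields $\tilde N_S(I) = N^{d+1} p^{|S|}(1 + o(1))$, so $T_S = 1 + o(1)$.

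The main technical obstacle is the bookkeeping needed to verify uniformly, over every partition of $S$ and every affine symmetry class, that degenerate and non-principal contributions are negligible compared to $N^{d+1} p^{|S|}$; the essential inputs are the inequality $|S|/(k-1) \leq 2$ (which forces $N^{d+1} p^{|S|} \to \infty$ even at the extreme $|S| = 2^d$) and $Np \to \infty$ (which defeats the codimension-$\geq 1$ error terms). A union bound over the $2^{2^d}$ subsets $S$ then concludes the proof.
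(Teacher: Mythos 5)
Your decomposition of $\Vert \nu_I - 1 \Vert_{U^d}^{2^d}$ into the correlations $T_S$, $S \subseteq \{0,1\}^d$, is a genuinely different way of organizing the computation than the paper's, which groups directly by coincidence partitions of $\{-1,0,1\}^d$ and evaluates the signed binomial sum $(-p)^{2^d}$. The two are related by inclusion--exclusion and rest on the same two engines (Theorem~\ref{theory:count} on cube hypergraphs for the generic part, a first-moment bound for the coincident part), so the difference is one of bookkeeping rather than of strategy.

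However, there is a genuine gap in your treatment of Category~(ii). You say a stratum of codimension $c$ with $\delta$ collisions contributes a relative error $N^{-c}p^{-\delta}$, and assert this is $o(1)$ because $Np\to\infty$ and $|S|/(k-1)\le 2$. Neither fact controls $N^{-c}p^{-\delta}$ when $\delta > c$, and typically $\delta$ is much larger than $c$: a single rank-one condition such as $h_1=0$ merges the $2^{d-1}$ pairs $\{\omega,\omega+e_1\}$ in $\{0,1\}^d$, giving $\delta = 2^{d-1} = k-1$ with $c=1$; since $p = \tilde\Theta(N^{-1/(k-1)})$, that alone gives $N^{-1}p^{-(k-1)} = \tilde\Theta(1)$. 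Without an inequality linking $\delta$ to $c$, nothing rules out $\delta$ growing further with $c=1$, in which case the ratio diverges. The missing input is the combinatorial lemma that a rank-$c$ coincidence system partitions $\{0,1\}^d$ into classes each of size at most $2^c$ (the paper deduces this from the fact that the $c$-dimensional rowspace of the coincidence matrix can meet a translate of the Boolean cube in at most $2^c$ points). That gives $\delta \le |S|(1-2^{-c})$, and with $k-1 = 2^{d-1}$ one finds $N^{-c}p^{-\delta} = \tilde O\left(N^{\,2 - c - 2^{1-c}}\right)$, which is $\tilde O((\log N)^{-1})$ at $c=1$ and has strictly negative polynomial exponent for $c\ge 2$. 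This is exactly where the hypothesis $2^{d-1}=k-1$ enters, and without it your argument for negligibility of the degenerate strata does not close.

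Separately, your claimed bound $\Delta_a(\mc{G}_{\mc{C}_0}) = O(N^{d+1-a})$ is false for $a > d+1$: the $a$ linear equations $x + h\cdot\omega_j = y_j$ have coefficient vectors $(1,\omega_j)$ living in $\mathbb{Z}_N^{d+1}$, so they are automatically dependent once $a > d+1$. The correct general bound is $O(N^{d-\lceil\log_2 a\rceil})$, as in the paper's Lemma~\ref{lem:extender}, proved by induction on $d$. The degree hypothesis of Theorem~\ref{theory:count} still follows from the correct bound (again using $k-1=2^{d-1}$), but as stated your verification is based on a false estimate.
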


\noindent Of course, the set $I$ contains no $k$-term arithmetic progression.  So we conclude that if 
the function $s(k)$ exists then it satisfies $s(k) > 1+ \log_2 (k -1)$.  The remainder of this Section is a proof of Corollary~\ref{cor:jacob}.

We begin by noting that $ \mc{H}_k $ satisfies the conditions required for an application of 
Theorem~\ref{theory:main}.  The condition on $ \Delta_a $ follows from the fact that
any $2$ elements of $ \mathbb{Z}_N $ are in at most $k^2$ edges.   Furthermore, 
for any $v,v' \in \mathbb{Z}_N$ there are at most $k^6$ pairs of edges $e,e'$ such that
$v \in e, v' \in e'$, and $|e \cap e'| \ge 2$.  (Observe that setting the positions of 2 vertices in $ e \cap e'$ and $v,v'$ in the two arithmetic progressions  $e$ and $e'$ introduces a pair of linear equations that the differences for $e$ and $e'$ satisfy.  This determines these differences uniquely.)  Thus, we also have the desired condition on $ \Gamma $.  We conclude that with high probability the $k$-AP-free process produces a $k$-AP free set $I$ of size $$\Om \left(N^\frac{k-2}{k-1} \log^\frac{1}{k-1} N \right).$$

In order to compute $ \| \nu_I - 1 \|_{U^d} $ we consider the hypergraph 
$ \mc{G} $ of `$d$-cubes.'  For $ x \in \mathbb{Z}_N $ and $ h \in \mathbb{Z}_N^d $ define
$$ e_{x,h} = \left\{x + \om \cdot h: \om \in \{0,1\}^d  \right\}. $$
We say that $ h \in \mathbb{Z}_N^d $ has {\em no coincidences} if the elements of
$ \left\{\om \cdot h: \om \in \{-1,0,1\}^d  \right\} $ are distinct.  In other words, $h$ 
has no coincidences if
\[  \left|\left\{\om \cdot h: \om \in \{-1,0,1\}^d  \right\} \right| = 3^d. \]
We then define $ \mc{G} $ to be the hypergraph of $d$-cubes that have no coincidences; that is, we set
$$ \mc{G}  = \left\{  e_{x,h} : h \text{ has no coincidences} \right\}. $$
We now establish the conditions required for an application of
Theorem~\ref{theory:count} to the number of edges of $ \mc{G}$ that appear in $ I$.  Note that $ | \mc{G} | = (1+o(1)) N^{d+1} $.
As $p = |I|/N = \tilde{ \Theta} ( N^{- \frac{1}{k-1}} ) $, we have
\[ p^{2^d} | \mc{G}| =  \tilde{\Theta}( N^{-2} N^{d+1} ) \to \infty .\]
Since we do not allow $d$-cubes with coincidences, no element of $ {\mathcal G} $ contains a $k$-AP  (If $ h \cdot y - h \cdot x = h \cdot z - h \cdot y $ then $ h \cdot (y-x) $ coincides with $ h \cdot (z-y) $.)   The degree condition needed for an application of 
Theorem~\ref{theory:count} is an easy consequence of the following Lemma.

\begin{lemma}
\label{lem:extender}
For $1 \le a \le 2^d$, any set of $a$ vertices is contained in at most $O \left(N^{d-\lceil\log_2 a \rceil} \right)$ edges of $\mc{G}$.
 \end{lemma}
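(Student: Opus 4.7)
The plan is to parametrize the edges of $\mc{G}$ containing a fixed set $\{v_1, \dots, v_a\}$ of vertices. Because $h$ has no coincidences, the map $\om \mapsto x + \om \cdot h$ from $\{0,1\}^d$ onto $e_{x,h}$ is a bijection, so each edge $e_{x,h}$ containing all of $v_1, \dots, v_a$ corresponds to an injective map $\om : [a] \to \{0,1\}^d$ together with a pair $(x,h) \in \mb{Z}_N^{d+1}$ satisfying
\[ v_j = x + \om_j \cdot h, \qquad j = 1, \dots, a. \]
Since $d$ and $a$ are constants, there are only $O(1)$ choices of $\om$, so it will suffice to show that for each fixed $\om$ the number of valid $(x,h)$ is at most $O\bigl(N^{d - \lceil \log_2 a \rceil}\bigr)$.

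For a fixed $\om$, the solution set is either empty or a coset of the kernel of the $a \times (d+1)$ matrix $M_\om$ whose $j$-th row is $(1, \om_{j,1}, \dots, \om_{j,d})$; its size is therefore $N^{d+1-r}$, where $r = \mathrm{rank}_{\mb{Z}_N}(M_\om)$. A routine row-reduction (subtract the first row from each of the others) gives $r = 1 + \dim \mathrm{aff}(\om_1, \dots, \om_a)$, where $\dim \mathrm{aff}$ denotes the dimension of the affine span over $\mb{Z}_N$. It therefore suffices to establish
\[ \dim \mathrm{aff}(\om_1, \dots, \om_a) \ge \lceil \log_2 a \rceil. \]

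I will prove the equivalent statement: every affine subspace $W \subseteq \mb{Z}_N^d$ of dimension $k$ satisfies $|W \cap \{0,1\}^d| \le 2^k$. The proof is by induction on $k$, the base case $k=0$ being trivial. For $k \ge 1$, pick any coordinate $i$ on which $W$ is nonconstant; then $W \cap \{x : x_i = 0\}$ and $W \cap \{x : x_i = 1\}$ are affine subspaces of dimension at most $k-1$, so by induction each contains at most $2^{k-1}$ points of $\{0,1\}^d$, and together they cover $W \cap \{0,1\}^d$. Applying this with $W = \mathrm{aff}(\om_1, \dots, \om_a)$ yields $a \le 2^{\dim W}$, completing the proof of the lemma.

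The only mild subtlety is that the affine-dimension bound must be taken over the prime field $\mb{Z}_N$ rather than over $\mb{Q}$; the inductive argument handles this uniformly, so no lifting to characteristic zero is needed. Everything else is standard linear algebra.
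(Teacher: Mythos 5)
Your proof is correct and takes a genuinely different route from the paper. The paper's argument proceeds by induction on $d$: it locates the pair $\om_j, \om_{j'}$ at minimal Hamming distance, uses the two corresponding equations to cut the number of allowed $h$-restrictions to the differing coordinates $L$ by a factor of $N$, and then observes (via minimality of the Hamming distance) that at least half of the constraints survive as a problem in a cube of dimension $d - |L|$, so the induction hypothesis applies with $a' \ge a/2$. You instead set the problem up as one linear system in $(x,h) \in \mb{Z}_N^{d+1}$ and read off the solution count as $N^{d+1-r}$ with $r = 1 + \dim\mathrm{aff}(\om_1,\dots,\om_a)$; the heart of the argument then becomes the purely combinatorial fact that a $k$-dimensional affine subspace of $\mb{Z}_N^d$ (with $N$ prime) meets $\{0,1\}^d$ in at most $2^k$ points, which you establish by induction on $k$ by splitting on a nonconstant coordinate.

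The two proofs are of comparable length, but yours is arguably cleaner, and it has a pleasant unifying feature: the Boolean-points lemma you isolate is essentially the same ingredient the paper later invokes separately in the proof of Corollary~\ref{cor:jacob} (when it bounds $|P| \le 2^a$ for parts of $\overline{\mc{P}}$ by noting that a rank-$a$ rowspace can contain at most $2^a$ vectors with entries in a fixed pair of values per coordinate). So your approach derives Lemma~\ref{lem:extender} and that later bound from the same linear-algebraic statement. One thing worth making explicit is that you use $N$ prime so that $\mb{Z}_N$ is a field and rank/dimension arguments and the fiber-dimension count in the induction step ($W \cap \{x_i = c\}$ has dimension exactly $k-1$ when $W$ is nonconstant on coordinate $i$) behave as in linear algebra over a field; this is available here since the whole section assumes $N$ prime. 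Everything checks out.
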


 \begin{proof}
First, note that there are $\left(2^d \right)_a$ ways to specify which element of $ \{ 0,1\}^d $ corresponds to 
each vertex in our set of $a$ given vertices.  So, it suffices to prove the following: 
For any $ y_1, \dots, y_a \in {\mathbb Z}_N $, not necessarily distinct, and $ \om_1, \dots, \om_a \in \{0,1\}^d$, 
distinct, there are at most $O \left(N^{d-\lceil\log_2 a \rceil} \right)$ choices of $ x \in {\mathbb Z}_N$ and
$ h \in {\mathbb Z}_N^d $ such that $ y_j= x + \om_j \cdot h $ for $j=1, \dots, a$.

We prove this statement by induction on $d$.  Note that this statement is trivial if $a=1$.  This dispatches
the base case ($d=1$) and allows us to hence forth consider $ a \ge 2$.
%

WLOG assume that the Hamming distance between $\om_1$ and $ \om_2$ is minimal among distances between pairs of vectors from $\om_1, \ldots, \om_a$, and suppose that $L \subset \{1, \ldots, d \}$ is the set of coordinates at which $\om_1$ and $\om_2$ differ. Then there are $O\left( N^{|L|-1} \right)$ ways to specify the coordinates $h_\l$ for $\l \in L$ which are consistent with $y_1= x + \om_1 \cdot h$ and $ y_2 = x + \om_2 \cdot h$.

Now, consider one of the ways to set the coordinates $L$.  We may view the remainder of the embedding as a lower dimensional cube, with $d' = d - |L|$ and $a' \ge \frac{1}{2} a$ (since if there were three vectors $\om_j, \om_{j'}, \om_{j''}$ that only differed in coordinates of $L$, then two of them would have Hamming distance less than $|L|$, contradicting the fact that the Hamming distance from $\om_1$ to $\om_2$ is minimal).

Appealing to the induction hypothesis, altogether there are $$O\left(N^{|L|-1} \cdot N^{(d-|L|) - \left\lceil\log_2 \left(\frac{1}{2} a \right) \right\rceil} \right) = O \left(N^{d- \lceil\log_2 a \rceil} \right)$$ possible $x, h$.

 \end{proof}

We are now prepared to compute the uniformity norm for $ I =I(i_{\rm max})$.
Recall that $p= \frac{|I|}{N}$. Note that 
$\nu_I = \frac{1}{p} 1_I$.  We calculate $\Vert \nu_I - 1 \Vert_{U^d}$ by first considering the $h \in {\mathbb Z}_N^d $ that have no coincidences.  We will see below that the contribution from $ h $ with the property that $ \{ h \cdot \om : \om \in \{-1,0,1\}^d \} $ are not all distinct is negligible.

Consider the number of edges of the hypergraph $\mc{G}$ that are contained in $I$.  As we note above, we can apply Theorem~\ref{theory:count} to get an estimate for this number. Now note that if the hypotheses of Theorem~\ref{theory:count} hold for $\mc{G}$ then they also hold for the shadow of $\mc{G}$ on the $x^{th}$ level (i.e. the hypergraph on vertex set $V$ whose edges are all sets of size $x$ contained in an edge of $\mc{G}$).   We conclude that for each $0 \le x \le 2^d$, w.h.p. the number of edges of $\mc{G}$ with exactly $x$ vertices in $I$ is  $$(1+o(1)) N^{d+1} \binom{2^d}{x} p^x.$$ Thus to compute $\Vert \nu_I - 1 \Vert_{U^d} = \Vert \frac{1}{p} 1_I - 1 \Vert_{U^d}$ using (\ref{eq:gowers}), we start with the sum of the terms corresponding to edges of $\mc{G}$. We get that 

\begin{equation*}
\begin{split}
\sum_{e \in \mc{G}} \left( \frac{1}{p} -1 \right)^{ |e \cap I|} (-1)^{2^d - |e \cap I|} 
& =
N^{d+1} \displaystyle \sum_{0 \le x \le 2^d} (1+o(1)) \binom{2^d}{x} p^x \left(\frac{1}{p} - 1\right)^x (-1)^{2^d - x} \\ 
& = N^{d+1}[(1-p) - 1]^{2^d} + o\left(N^{d+1} \right) \\
& = o\left(N^{d+1} \right)
\end{split}
\end{equation*}
with high probability.

It remains to address the terms in (\ref{eq:gowers}) corresponding to $h$ such that the values $ \{ h \cdot \om: \om \in \{-1,0,1 \}^d \} $ are not all distinct. Each such vector $h$ defines a partition $\mc{P}$ of $\{-1,0,1\}^d$: each part $P \in \mc{P}$ is a maximal subset of $ \{-1,0,1\}^d$ with the property that the values  $ h \cdot \om$ are the same for all $\om \in P$.

We compute the remaining contribution to (\ref{eq:gowers}) by summing over all possible partitions.  All vectors $h$ that define a given partition $\mc{P}$ satisfy a system of linear equations: namely, we have $h \cdot \om = h \cdot {\om}' $ for every pair $\om, {\om}'$ in the same part of $\mc{P}$. Suppose these equations give the matrix equation $A h = 0$, and assume $A$ has rank $a$. Then there are $O\left(N^{d-a+1} \right)$ pairs $x, h$ that respect this partition $\mc{P}$.

The partition $ \mc{P} $ of $ \{-1,0,1\}^d $ defines a partition $ \overline{ \mc{P}} $ of $ \{0,1\}^d $ by restriction.
(Of course, we only interested in $ \om \in \{0,1\}^d$ when computing the contribution to the Gowers norm).  We claim that each part $P$ of $\overline{\mc{P}}$ we have $ | P | \le 2^a$. To see this, let $\om_0 \in P$ where $ P \in \overline{\mc{P}}$ and note that for every $\om \in P$, the rowspace of $A$ contains $\om- \om_0$. But a subspace of $\mathbb{Z}_N^d$ of dimension $a$ can only intersect $ \{0,-1\}^y \times \{0,1\}^{d-y}$ in at most $2^a$ points.  So the rowspace of $A$ can only contain $2^a$ vectors that are $0$ or $-1$ on the support of $\om_0$ and $0$ or $1$ otherwise. Thus, $|P| \le 2^a$.

For the partition $\mc{P}$, there are $O\left( N^{d-a+1} \right)$ pairs $x, h$ that agree with $ \mc{P}$.  Fix such a pair $x, h$, and a collection of parts $\mc{S} \subset \overline{\mc{P}}$. Consider the event $\mc{E}_{\mc{S}}$ that the images (under the map $ \varphi: \{0,1\}^d \to \mathbb{Z}_N$ defined by $ \varphi( \om) = x + h \cdot \om$) of the parts of $\mc{S}$ are in $I$, but none of the image of parts of $\overline{\mc{P}} \setminus \mc{S}$ are in $I$. By a simple first moment calculation (using Lemma~\ref{lem:wolf}), we have $$ \mathbb{P} \left[\mc{E}_\mc{S} \right] = O\left(p^{|\mc{S}|}\right).$$
The sum of the terms in $\Vert \nu_I - 1 \Vert_{U^d}$ corresponding to pairs $x, h$ such that $h$ respects the partition $\mc{P}$ is 
\begin{multline*}
O\left( N^{d-a+1} \displaystyle \sum_{\mc{S} \subset \overline{\mc{P}}} p^{|\mc{S}|} \left(\frac{1}{p} - 1\right)^{|\cup_{P \in \mc{S}} P| }   \right) \\ = O\left( N^{d-a+1}  p^{|\overline{\mc{P}}|- 2^d} \right) = O\left( N^{d-a+1} \cdot p^{2^{d-a}-2^d} \right) = o\left( N^{d+1} \right),
\end{multline*}
where we use $ k-1 = 2^{d-1} $ in the last equation.  As there are only finitely many partitions $ \mc{P} $, the proof of Corollary~\ref{cor:jacob} is complete.

\section{Hypergraph Tur\'an}

\label{sec:turan}

Consider the $H$-free process, where $H$ is a $k$-uniform hypergraph ($k\ge 2$) with vertex set $V_H$ 
and edge set $ E_H$.  Set $ v_H = | V_H| $ and $ e_H = |E_H| $.  Recall that the $H$-free process
is the same as random greedy independent set on the hypergraph $ {\mathcal H}_H $ that has vertex 
set $ \binom{[n]}{k} $ and edge set consisting of all copies of $H$.  
We say that $H$ is strictly $k$-balanced if and only if
\begin{equation}
\label{eq:2balanceH}  \frac{e_{H[W]} - 1}{ |W| - k } < \frac{ e_H - 1}{ v_H -k} \ \ \ \ \ 
\text{ for all } W \subsetneq V_H \text{ such that } |W| > k,
\end{equation}
where $H[W]$ is the subgraph of $H$ induced by $W$.  We claim that $ \mc{H}_H $ satisfies (\ref{eq:degcond}) if and only if $H$ is strictly $k$-balanced. To see this,  note that in this setting we have $r=e_H$, $N = \Theta\of{n^k}, D=\Theta\of{n^{v_H-k}}$.  For 
$ a \ge 2$ let $ v_a $ be the minimum number of vertices of $H$ spanned by a set of $a$ edges of $H$.  We have
\[  \Delta_a( \mc{H}_H ) = \Theta \left( n^{ v_H - v_a} \right) = \Theta \left( n^{ (v_H-k) \left[ 1 - \frac{ v_a -k}{ v_H -k} \right]} \right) \]
and
\[ D^{ \frac{ e_H-a}{ e_H-1}} = \Theta \left( n^{ \frac{ ( v_H-k)( e_H -a)}{ e_H-1}} \right)
= \Theta \left( n^{ (v_H-k) \left[ 1 - \frac{ a -1}{ e_H -1} \right]}  \right). \]
Thus, we see that $ \mc{H}_H $ satisfies (\ref{eq:degcond}) if and only  $(v_a-k)/(v_H-k) > (a-1)/( e_H-1)$ for all $ a \ge 2$, which
holds if and only if $H$ is strictly $k$-balanced.  Furthermore, if $H$ has no vertex of degree $1$ then $\Gamma(\mc{H}_H) = O(n^{v_H - k -1})$.

Therefore, Theorem~\ref{theory:main} 
implies lower bounds on the Tur\'an numbers of $k$-uniform, strictly $k$-balanced hypergraphs.
\begin{corollary}If $H$ is a $k$-uniform, strictly $k$-balanced hypergraph with $v_H$ vertices, $e_H \ge 3$ edges, and no vertex of degree $1$ then $$ex(n, H) = \Omega\left( n ^ { k  - \frac{ v_H - k}{e_H - 1}}    \log ^ \frac{1}{e_H-1} n\right).$$
\end{corollary}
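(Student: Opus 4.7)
The plan is a direct application of Theorem~\ref{theory:main} to the hypergraph $\mc{H}_H$ on vertex set $\binom{[n]}{k}$ whose edges are the edge sets of copies of $H$ in the complete $k$-uniform hypergraph on $[n]$. This hypergraph has $N = \Theta(n^k)$ vertices, is $r$-uniform with $r = e_H$, and is $D$-regular with $D = \Theta(n^{v_H - k})$, so in particular $D > N^{\epsilon}$ for some $\epsilon = \epsilon(H,k) > 0$. The key translation is that a set $I \subseteq \binom{[n]}{k}$ is independent in $\mc{H}_H$ precisely when the $k$-uniform hypergraph on $[n]$ with edge set $I$ is $H$-free, so any lower bound on the independence number of $\mc{H}_H$ produced by Theorem~\ref{theory:main} descends to the same lower bound on $ex(n,H)$.

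Next I would verify the hypotheses of Theorem~\ref{theory:main}. The degree condition (\ref{eq:degcond}) is equivalent to strict $k$-balancedness of $H$, exactly by the computation opening this Section. The codegree condition $\Gamma(\mc{H}_H) < D^{1-\epsilon}$ requires a separate check: I would count pairs $(e,e')$ of copies of $H$ with $|e \cap e'| = e_H - 1$ and prescribed non-shared $k$-sets $v \in e \setminus e'$ and $v' \in e' \setminus e$. The union $V(e) \cup V(e')$ spans at most $2v_H - v_{e_H - 1}$ vertices, and since $v$ and $v'$ are distinct $k$-sets each occupying the ``extra'' edge of one copy, fixing $v$ and $v'$ leaves at most $O(n^{v_H - k - 1})$ completions, which is $O(D^{1-\epsilon'})$ for any $\epsilon' < 1/(v_H - k)$.

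Plugging $N = \Theta(n^k)$, $D = \Theta(n^{v_H - k})$, and $r = e_H$ into the bound (\ref{eq:lowerbound}) then gives, with high probability, an $H$-free $k$-uniform hypergraph on $[n]$ with
\begin{equation*}
\Omega \left( n^k \cdot n^{-(v_H - k)/(e_H - 1)} \, (\log n)^{1/(e_H - 1)} \right) = \Omega \left( n^{k - (v_H - k)/(e_H - 1)} \, (\log n)^{1/(e_H - 1)} \right)
\end{equation*}
edges, yielding the claimed lower bound on $ex(n,H)$. The main obstacle, and only substantive step beyond direct substitution, is the clean verification of the codegree condition via the structural observation above; everything else follows mechanically from the equivalence between strict $k$-balancedness and (\ref{eq:degcond}) together with the conclusion of Theorem~\ref{theory:main}.
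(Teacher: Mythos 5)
Your overall strategy matches the paper's: form $\mc{H}_H$ on $\binom{[n]}{k}$, identify $r=e_H$, $N=\Theta(n^k)$, $D=\Theta(n^{v_H-k})$, observe that the degree condition \eqref{eq:degcond} is equivalent to strict $k$-balancedness, and substitute into \eqref{eq:lowerbound}. The paper in fact stops there -- it never verifies the codegree hypothesis $\Gamma(\mc{H}_H)<D^{1-\e}$ -- so you are right that this is the one substantive step that needs attention, and it is to your credit that you flagged it. But the verification you sketch is wrong. You bound the number of completions by $O(n^{v_H-k-1})$, implicitly assuming that $v$ and $v'$ together pin down at least $k+1$ vertices of the joint structure \emph{beyond} those that are automatically free. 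That is only true when every vertex of $H$ has degree at least two, i.e.\ when $v_{e_H-1}=v_H$. If $H$ has a degree-one vertex, take $v$ and $v'$ to be $k$-sets that agree on $k-1$ elements; then the $e_H-1$ shared copies of the other edges span only $v_H-1$ vertices, of which just $k-1$ are constrained by $v\cup v'$, leaving $v_H-k$ free vertices and hence $\Theta(n^{v_H-k})=\Theta(D)$ codegree pairs. This kills the hypothesis $\Gamma<D^{1-\e}$.

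For $k=2$ this never arises: a strictly $2$-balanced graph has minimum degree at least two (a degree-one vertex would force $e_H\le v_H-2$, making $H$ a disconnected forest, which fails \eqref{eq:2balance}). But for $k\ge 3$ strict $k$-balancedness does \emph{not} preclude degree-one vertices. For example, the $3$-uniform loose path $\{1,2,3\},\{3,4,5\}$ has $v_H=5$, $e_H=2$, is strictly $3$-balanced, and has degree-one vertices; here $r=2$, $D=\Theta(n^2)$, and $\Gamma(\mc{H}_H)=\Theta(n^2)=\Theta(D)$. Its Tur\'an number is $\Theta(n)$, whereas the corollary's formula gives $\Omega(n\log n)$, so this is not merely a gap in the verification but a counterexample to the statement as written. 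So the corollary needs an extra hypothesis (e.g.\ $\delta(H)\ge 2$, equivalently $v_{e_H-1}=v_H$), under which your codegree count does go through. Since the paper's own proof silently omits the codegree check, you have actually located a genuine gap in the source; but the specific bound you asserted does not hold in the generality claimed, and your proposal needs to either add the minimum-degree hypothesis or prove the codegree bound by a different route.
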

\noindent
For general $k$-partite, strictly $k$-balanced hypergraphs, the bounds we give are best known. 
(Although for a few fixed hypergraphs better bounds are known; for example, 
see \cite{t1},\cite{t2}.) Note that the complete 
$k$-partite $k$-uniform hypergraph $K_{s_1, \ldots, s_k}$ is strictly $k$-balanced so long 
as all $s_i \ge 1$ and $s_i, s_i' \ge 2$ for some $i\neq i'$. To see this, first note that the condition $s_i, s_i' \ge 2$ implies there are no vertices of degree $1$. Now suppose $W$ is a set of vertices that 
has $w_i$ from the $i^{th}$ part. Then condition \eqref{eq:2balanceH} becomes  
\[\frac{\prod w_i - 1}{ \sum w_i - k } < \frac{ \prod s_i - 1}{ \sum s_i -k}\]
for all sequence of integers $ w_1, w_2, \dots, w_k $ such that $ 1 \le w_i \le s_i $ for all 
$i$, $w_i > 1$ for some $i$, and $ w_i < s_i $ for some $i$.  Straightforward multivariate 
calculus shows that the above inequality holds for all real numbers $w_1, \ldots, w_k$ such that 
$1 \le w_i \le s_i$, except for the case when $w_i=1$ for all $i$, and the case when $w_i=s_i$ for all $i$.

\section{Lower bound: Proof of Theorem~\ref{theory:main}}

\label{sec:lower}

We use dynamic concentration inequalities
to prove that carefully selected statistics remain very close to their expected trajectories throughout the
process with high probability.  Our main goal is to prove dynamic concentration of $|V(i)|$, which is the number of vertices that 
remain in the hypergraph.  In order to achieve this goal, we also track the following variables:
For every vertex $ v \in V(i)$ and $ \ell =2, \dots, r$ define
$ d_\ell(i,v) = d_{\ell}(v)$ to be the number of edges of cardinality $ \ell$ in $ \mc{H}(i) $ that contain
$v$.

We employ the following conventions throughout this section. 
If we arrive at a hypergraph $ \mc{H}(i) $ that has edges $e,e'$ such that $ e \subseteq e'$ then we
remove $ e'$ from $ \mc{H}$.  Note that this has no impact in the process as the presence of $ e$ ensures that
we never have $ e' \subset I(j)$.  For any variable $X$ we use the notation $ \D X := X(i+1) - X(i) $ for 
the one step change in $X$.  Since {\em every} expectation taken in this section is conditional on the first $i$ steps
of the algorithm, we suppress the conditioning.  That is, we simply write $ \E[ \, \cdot \, ] $ instead 
of $ \E[ \, \cdot \mid \mc{F}_i] $ where $ \mc{F}_0, \mc{F}_1, \dots $ is the natural filtration generated by the algorithm.  If $ f$ and $g$ are functions of $N$ with the property that $ f$ is bounded by $g$ times some poly-logarithmic function (in $N$) then we write $ f= \tilde{O}(g) $.

We begin by discussing the expected trajectories of the variables we track.  Here we use the binomial random set $ S(i) $ as a guide.  Recall that each vertex is in $S(i)$, independently, with probability $ p = p(i)= i/N$.  Let $v$ be a fixed vertex.  
The expected number of edges $e \in \mc{H}$ such that $ v \in e $ and $ e \setminus \{v\} \subseteq S(i)$ is
$$D p^{r-1} = D \left(\frac{i}{N} \right)^{r-1} = t^{r-1}$$
where we parametrize time by setting $t:=\frac{D^{\frac{1}{r-1}}}{N} \cdot i$.  Thus, we set 
$$q = q(t):=e^{-t^{r-1}}$$ and think of $q$ as the probability that a vertex is in $V(i)$.  So, we should
have  $|V(i)| \approx q(t) N$ and $d_\l(v)$ should follow the trajectory 
$$ s_\l(t) := D \binom{r-1}{ \l -1} q^{\l-1} p^{r-\l} = \binom{r-1}{ \l-1}D^{\frac{\l-1}{r-1}}t^{r-\l}q^{\l-1}.$$
For the purpose of our analysis, we separate the positive contributions to $d_\l(v)$ from the negative contributions. 
We write $ d_r(v) = D - d_r^-(v) $, and for $ \ell < r $ we write
$ d_\l(v) = d_\l^+(v) - d_\l^-(v)$, where $d_\l^+(v), d_\l^-(v)$ are non-negative variables which count the number of edges of cardinality $\ell$ containing $v$ that are created and destroyed, respectively, through the first $i$ steps of the process.  We define
$$s_\l ^+(t):= D^{-\frac{1}{r-1}} \int_0^t \frac{\l s_{\l+1}(\t)}{q(\t)} d \t \;\;\;\;\;\;\;\;\;\;\; s_\l ^-(t):= D^{-\frac{1}{r-1}} \int_0^t \frac{(\l-1) s_{\l}(\t) s_2(\t)}{q(\t)} d \t,$$
and claim that should have $ d_\l^{\pm} \approx s_\l^{\pm} $.  
Note that $s_\l$ satisfies the differential equation 
\[s_\l ' =\frac{\l s_{\l+1} -(\l-1) s_{\l} s_2 }{q} = (s_\l^+) ' - (s_\l^-) '\] and so $s_\l = s_\l^+-s_\l^-$. The choice of $s_\l^\pm$ is natural in light of the usual
mechanism for establishing dynamic concentration and the observation that we have
$$ \E [\D d_\l^+ ] \approx \frac{1}{|V(i)|} \cdot \l d_{\l+1} \;\;\;\;\;\;\;\;\;\;  \E[ \D d_\l^- ] \approx \frac{1}{|V(i)|} \cdot (\l-1) d_{\l}d_2.$$


In addition to our dynamic concentration estimates, we need some auxiliary information about the evolving hypergraph $ \mc{H}(i)$.  
\begin{definition}[Degrees of Sets]
For a set of vertices $A$ of at least 2 vertices, let $d_{A \up b}(i)$ be the number of edges of size $b$ containing $A$ in 
$ \mc{H}(i) $.
\end{definition}

\begin{definition}[Codegrees]
For a pair of vertices $v,v'$, let $c_{a, a' \to k}(v,v',i)$ be the number of pairs of edges $e,e'$, such that 
$v \in e \setminus e', v' \in e' \setminus e$, $|e|=a, |e'|=a'$ and $|e \cap e'|=k$ and $ e, e' \in \mc{H}(i)$.
\end{definition}

\noindent
We do not establish dynamic concentration for these variables, but we only need relatively crude upper 
bounds.

In order to state our results precisely, we introduce a stopping time.  Set
$$i_{\rm max} := \z N D^{-\frac{1}{r-1}} \log^\frac{1}{r-1} N
\ \ \ \ \ \ \ \ \text{ and }  \ \ \ \ \ \ \ \ \ t_{\rm max} := \frac{D^{\frac{1}{r-1}}}{N} \cdot i_{\rm max} = \z  \log^\frac{1}{r-1} N,$$
where $\z>0$ is a constant (the choice of this constant is discussed below). Note that we have
\begin{equation}
\label{eq:minq}
q(t_{max}) = N^{-\zeta^{r-1}}.
\end{equation}
Define the stopping time $T$ as the minimum of $ i_{\rm max}$ and the first step $i$ such that 
any of the following conditions fails to hold:
\begin{align} 
\label{eq:points}
| V(i) | & \in Nq  \pm N D^{-\d} f_v \\
\label{eq:vertexdegree}
d_\l^\pm(v)  & \in s_\l^\pm  \pm D^{\frac{\l-1}{r-1}-\d}f_\l \ \ \ \ \text{ for } \ell = 2, \dots, r \text{ and all } v \in V(i) \\ 
\label{eq:setdegree}
d_{A \up b} & \le D_{a \up b}   \ \ \ \ 
\text{ for } 2 \le a < b \le r \text{ and all }   A \in \binom{V(i)}{a} \\
\label{eq:codegree}
c_{a, a' \to k}(v, v')  &\le  C_{a,a' \to k}
\ \ \ \ \text{ for all } v,v' \in V(i)  
\end{align}
where $ \d >0 $ is a constant and $ f_v, f_2, \dots, f_r $ are functions of $t$ (which will stay small enough so that the error terms are always little-o of the main terms) and 
$D_{a \up b}$ and $C_{a,a' \to k}$ are functions of $D$ (but not $t$) that satisfy
\begin{align*}
D_{a \up b} & \le  D^{ \frac{ b-a}{r-1} - \frac{ \e}{2}} \\
C_{a,a' \to k}  & \le D^{ \frac{ a + a' -k -2}{r-1} - \frac{ \e}{2}}.
\end{align*}
All of these parameters are specified explicitly below.

We prove Theorem~\ref{theory:main} by showing that $ \P (T < i_{\rm max}) < \exp \left\{ - N^{\Omega(1)} \right\} $.   We break the proof into two parts.  We first establish the crude bounds, namely (\ref{eq:setdegree}) and (\ref{eq:codegree}), in Section~\ref{sec:crude}.  We then turn to the dynamic concentration inequalities (\ref{eq:points}) and (\ref{eq:vertexdegree}) in Section~\ref{sec:dynamic}.

The constants $ \z, \d$ 
are chosen so that
\[ \z \ll \d \ll \e, \]
in the sense that $ \d $ is chosen to be sufficiently small with respect to $ \e$, and $ \z$ is chosen to be sufficiently small with respect to $ \d$.  The martingales that we consider below are stopped in the sense that when we define a sequence
$ Z(i)$ we in fact work with $ Z(i \wedge T)$.  Thus we can assume that the bounds (\ref{eq:points})- (\ref{eq:codegree}) always hold.  The martingales that depend on a fixed vertex $v$ (or a fixed sets of vertices $A$) are also {\em frozen} in the sense that we set $ Z(i) = Z(i-1) $ if the vertex $v$ (or some vertex in the fixed set $A$) is not in $ V(i)$.


%


\subsection{Crude bounds}

\label{sec:crude}


Define
\begin{align*}
D_{a \up b} & := D^{\frac{b-a}{r-1}-\e + 2(r-b)\lambda}  \\
C_{a, a' \to k}  &:= 2^r D^{\frac{a+a'-k-2}{r-1}-\e+ (2r-2k-2)\lambda}
\end{align*}
where $ \lambda = \e/4r $.  Throughout this section we use the bound
$ |V(i)| > N D^{-\lambda}$, which follows from (\ref{eq:minq}) and the fact that we may set 
$ \zeta >0 $ sufficiently small relative to $\epsilon$.
\begin{lemma}\label{dlemma}
Let $ 2 \le a < b \le r $.
\[ \P \left( \exists i \le T \text{ and } A \in \binom{ V(i)}{a} \text{ such that }   d_{A \up b} (i) \ge D_{a \up b} \right) \le \exp \left\{- N^{ \Omega(1)} \right\}.\] 
\end{lemma}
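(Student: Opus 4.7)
The plan is to prove Lemma~\ref{dlemma} by downward induction on the edge size $b$, from $b = r$ down to $b = a+1$, for each fixed $a \in \{2,\dots,r-1\}$. The base case $b = r$ is immediate: the process never enlarges an edge (rule~(1) only shrinks, rule~(2) only deletes), so $d_{A \up r}(i) \le d_{A \up r}(0) \le \Delta_a(\mc{H}) < D^{(r-a)/(r-1) - \e} = D_{a \up r}$ holds deterministically by \eqref{eq:degcond}.

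For the inductive step at level $b<r$, I would track the cumulative number $Y^+(i)$ of $b$-edges containing $A$ that have been \emph{created} through step $i$. Since $d_{A \up b}(0) = 0$ when $b<r$, we have $d_{A \up b}(i) \le Y^+(i)$, so controlling $Y^+$ suffices. A creation in step $i+1$ can only occur when the chosen vertex $v \notin A$ lies in an edge $e \in \mc{H}(i)$ of size $b+1$ with $A \subset e$; double counting pairs $(v,e)$ and invoking the inductive bound $d_{A \up b+1}(i) \le D_{a \up b+1}$ (valid for $i \le T$ by definition of the stopping time) yields
\[ \E[\D Y^+] \;\le\; \frac{1}{|V(i)|}\sum_{v \notin A} d_{A \cup \{v\} \up b+1}(i) \;=\; \frac{(b-a+1)\, d_{A \up b+1}(i)}{|V(i)|} \;\le\; \frac{(b-a+1)\, D_{a \up b+1}}{N D^{-\lambda}}, \]
where $|V(i)| \ge N D^{-\lambda}$ follows from \eqref{eq:points}, \eqref{eq:minq} and $\z \ll \e$. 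Summing over $i < i_{\rm max}$ and substituting the exponents gives a total predictable increase of $\tilde O(D^{-\lambda}) \cdot D_{a \up b}$, which is well below $D_{a \up b}/2$ since $D^\lambda \ge N^{\e^2/(4r)} = N^{\Omega(1)}$.

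To finish, I would apply Freedman's supermartingale inequality to the stopped process $Y^+(i \wedge T)$. The maximum one-step jump is $d_{A \cup \{v\} \up b+1}(i) \le D_{a+1 \up b+1}$ (the inductive bound at level $b+1$ with one extra vertex fixed), and because $Y^+$ is non-decreasing the cumulative conditional second moment is at most $M \cdot D_{a \up b}/2$ with $M = D_{a+1 \up b+1}$. Since $D_{a \up b}/M = D^{2\lambda}$, Freedman gives
\[ \P\!\left[\, Y^+(i \wedge T) > D_{a \up b} \text{ for some } i \le i_{\rm max}\right] \;\le\; \exp\!\left(-\Omega(D^{2\lambda})\right) \;=\; \exp(-N^{\Omega(1)}), \]
and a union bound over the $O(N^a)$ choices of $A$ completes the inductive step.

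The main obstacle will be bookkeeping the interdependence of the stopping conditions: the level-$b$ argument relies on the level-$(b+1)$ bound holding throughout $[0,T]$, yet $T$ is itself defined via all such bounds. This is handled cleanly by noting that $T$ is the \emph{first} failure, so up to $T$ every condition (including the inductive one) holds by definition. The delicate calculation is to verify that the slack $2\lambda$ in the exponent separating $D_{a \up b}$ from $D_{a+1 \up b+1}$, together with the extra $\lambda$ gained in passing from $D_{a \up b+1}$ to $D_{a \up b}$, exactly absorb the Freedman ratio and the integrated predictable change — this is what dictates the choice $\lambda = \e/(4r)$.
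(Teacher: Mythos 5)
Your proposal is correct and takes essentially the same approach as the paper: reverse induction on $b$ with the base case settled by the hypothesis on $\Delta_a(\mathcal{H})$, then bounding the creation count via the level-$(b+1)$ bounds enforced by the stopping time $T$, and finishing with Freedman's inequality using $C = D_{a+1\up b+1}$ so that the slack $D_{a\up b}/D_{a+1\up b+1} = D^{2\lambda}$ drives the exponential bound before a union bound over $A$. The only (minor) imprecision is the phrase ``apply Freedman's supermartingale inequality to $Y^+(i\wedge T)$'' --- $Y^+$ itself is nondecreasing so not a supermartingale; one should subtract its compensator (or, as the paper does, a deterministic linear drift) first, but since you separately bound that predictable drift by $\tilde{O}(D^{-\lambda})D_{a\up b} \ll D_{a\up b}/2$, this is evidently what you intend.
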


\begin{proof}
We go by reverse induction on $ b $.  Note that if $b=r$ then the desired bound follows immediately from
the condition on $\D_a( \mc{H} )$ assumed in the statement of Theorem~\ref{theory:main}.

Let $b < r$ and consider a fixed $ A \in \binom{V}{a} $.  For $0 \le j \le D_{a+1 \up b+1}$, let $N_j(i)$ be the number of vertices in $V(i)$ (but not in $A$) that appear in $j$ of the edges counted by $d_{A \up b+1}(i)$. Note that clearly $\sum N_j(i) = |V(i)|$,  and $\sum jN_j(i) = (b+1-a) d_{A \up b+1} \le (b+1-a) D_{a \up b+1}$ .  



Then (w.r.t.\ the filtration $\mc{F}_i$) $\Delta d_{A \up b}( i)$ is stochastically dominated by $\Delta X(i)$, where $X(i)$ is a variable such that $X(0)=0$ and 
$$ \P(\D X = j) = \frac{N_j(i)}{ |V(i)| } $$
We will bound $d_{A \up b}( i)$ by appealing to this stepwise stochastic domination, and bound $X(i)$ using the following lemma due to Freedman \cite{F}:

\begin{lemma}[Freedman] \label{lem:Freedman}
Let $Y(i)$ be a supermartingale, with $\Delta Y(i) \leq C$ for all $i$, and $V(i) :=\displaystyle \sum_{k \le i} Var[ \Delta Y(k)| \mathcal{F}_{k}]$  Then
$$ \P\left[\exists i: V(i) \le v, Y(i) - Y(0) \geq d \right] \leq \displaystyle \exp\left(-\frac{d^2}{2(v+Cd) }\right).$$ \end{lemma}
\noindent
To apply the lemma, we calculate
$$ \E[\D X] = \frac{1}{|V(i)|} \sum j N_j \le \frac{1}{ND^{-\lambda}} (b+1-a) D_{a \up b+1}  \le \frac{r}{N} D^{\frac{b-a+1}{r-1} - \e + \left(2r-2b-1 \right) \lambda}.$$
Thus if we define $$Y(i):=  X(i) - \frac{r}{N} D^{\frac{b-a+1}{r-1} - \e + \left(2r-2b-1 \right) \lambda}\cdot i$$ then $Y(i)$ is a supermartingale. Now
\begin{multline*}
Var[\D Y] = Var[\D X] \le \E \left[(\D X)^2 \right] = \frac{1}{ |V(i)| } \sum{j^2 N_j(i)} \\
\le \frac{D_{a+1 \up b+1}}{ |V(i)|} \sum{j N_j} \le \frac{D_{a+1 \up b+1}}{ |V(i)|} \cdot r D_{a \up b+1} \le \frac{r}{N} D^{\frac{2b-2a+1}{r-1} -2\e + \left(4r-4b-3 \right) \lambda}
\end{multline*}
So we apply Lemma~\ref{lem:Freedman} with $$v =  (\log N )D^{\frac{2b-2a}{r-1} - 2\e + \left(4r-4b-3 \right) \lambda},$$
recalling (\ref{eq:notdense}),
 and $C= D_{a+1 \up b+1} = D^{\frac{b-a}{r-1} - \e + (2r-2b-2) \lambda}$ to conclude that we have
%
\[ P \left[Y(i) \ge D^{\frac{b-a}{r-1} - \e + \left(2r-2b - 1\right) \lambda} \right]
\le \exp\left\{ -N^{\Om(1)} \right\} .\]
%
This suffices to complete the proof (applying the union bound over all choices of the set $A$).
\end{proof}

\begin{lemma}\label{clemma}

Let $ 2 \le a, a' \le r $ and $ 1 \le k <a,a' $ be fixed.
\[ \P \left( \exists i \le T \text{ and } v, v' \in V(i) \text{ such that }   c_{a,a' \to k}(v,v',i) \ge
C_{a, a' \to k} \right) \le \exp \left\{ -N^{\Omega(1)} \right\}. \]
\end{lemma}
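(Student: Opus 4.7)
The plan is to mirror the proof of Lemma~\ref{dlemma}, running a reverse induction on $a+a'$. In the base case $(a,a')=(r,r)$, the process never creates new edges of the maximum size $r$, so $c_{r,r\to k}(v,v',\cdot)$ is non-increasing in $i$ and it suffices to check $c_{r,r\to k}(v,v',0)\le C_{r,r\to k}$. For $k=r-1$ this is exactly the hypothesis $\G(\mc{H})<D^{1-\e}$; for $k<r-1$ it follows by choosing $e\ni v$ in $D$ ways, fixing the $k$-subset $e\cap e'\subseteq e\setminus\{v\}$, and invoking $\D_{k+1}(\mc{H})<D^{(r-k-1)/(r-1)-\e}$ to count the completions of $e'$.

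For the inductive step with $a+a'<2r$, a new pair $(e,e')$ counted at step $i+1$ but not at step $i$ must have at least one of $e,e'$ arising from an edge that contained the selected vertex $v^*$ and shrunk upon its removal. Splitting into three cases (only $e$ new, only $e'$ new, or both new via edges whose shared $(k+1)$-set contains $v^*$), we bound the total count of new pairs across all choices of $v^*\in V(i)$. For Case~C this is at most $(k+1)c_{a+1,a'+1\to k+1}(v,v',i)\le (k+1)C_{a+1,a'+1\to k+1}$ by the induction hypothesis, and integrating the per-step drift $(k+1)C_{a+1,a'+1\to k+1}/|V(i)|$ against $di=(N/D^{1/(r-1)})dt$ yields total contribution $\tilde O(D^{-\lambda})\cdot C_{a,a'\to k}$. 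For Cases~A and B the na\"ive bound via $c_{a+1,a'\to k}\le C_{a+1,a'\to k}$ is too weak by a factor $D^\lambda$; instead, factorize the Case~A count via the ``other'' edge, obtaining at most $\binom{a'-1}{k}(a-k)\,d_{a'}(v',i)\,D_{k+1\up a+1}$ pairs, and now invoke the dynamic concentration $d_{a'}(v',i)\approx s_{a'}(t)$ (valid up to $T$ by (\ref{eq:vertexdegree})). Integrating as before gives total contribution $\tilde O(D^{-2(a-k)\lambda})\cdot C_{a,a'\to k}$ for Case~A, and analogously for Case~B.

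Taking $\mu_i$ to be the resulting deterministic upper bound on $\E[\D c_{a,a'\to k}\mid\mc F_i]$, define the supermartingale $Y(i):=c_{a,a'\to k}(v,v',i)-\sum_{j<i}\mu_j$, stopped at $T$. To apply Freedman's inequality, bound the maximum one-step jump $C^*$: fixing $v^*$, the same factorizations give a worst-case count of $\binom{a-1}{k}D_{2\up a+1}D_{k+1\up a'}$ new pairs in Case~A (and similar in the other cases), so by Lemma~\ref{dlemma} we have $C^*=\tilde O(D^{(a+a'-k-2)/(r-1)-2\e+O(\lambda)})\le D^{-\e/2}\cdot C_{a,a'\to k}$. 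The accumulated variance is at most $i_{\rm max}\cdot C^*\cdot\max_i\mu_i\le C^*\cdot C_{a,a'\to k}$, so Lemma~\ref{lem:Freedman} with deviation $\tfrac{1}{2}C_{a,a'\to k}$ yields failure probability $\exp\{-N^{\Omega(1)}\}$; a union bound over the $O(N^2)$ pairs $(v,v')$ completes the proof.

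The main obstacle is arranging the factorization of the new-pair counts in Cases~A and B so that the resulting drift is polynomially smaller than $C_{a,a'\to k}$: a straightforward use of the crude codegree bound misses by a factor $D^\lambda$, and one must leverage the dynamic concentration of the vertex-degree $d_{a'}(v',i)$ (itself part of the stopping time $T$) in tandem with the crude set-degree bounds of Lemma~\ref{dlemma}. The small positive coefficients of $\lambda$ in the definitions of $C_{a,a'\to k}$ and $D_{a\up b}$ are calibrated precisely to absorb the accumulated polylogarithmic factors while preserving the $\e/2$ deficit in the final exponents.
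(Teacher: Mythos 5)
Your proposal is correct in its broad strokes and is mathematically sound, but it takes a genuinely different route from the paper. The paper's proof begins with a much shorter reduction: it observes that if $a'>k+1$ (or, by symmetry, $a>k+1$), then
$c_{a,a'\to k}(v,v')\le d_a(v)\binom{a}{k}D_{k+1\up a'}$,
and this bound, combined with $d_a(v)=O(D^{(a-1)/(r-1)})$ from the dynamic concentration estimates, already gives $c_{a,a'\to k}\le C_{a,a'\to k}$ with room to spare. Only the case $a=a'=k+1$ needs a martingale argument, and there the paper does a reverse induction on $k$ (base case $k=r-1$ from the hypothesis on $\Gamma(\mc{H})$), bounding the drift by $c_{k+2,k+2\to k+1}+c_{k+2,k+1\to k}+c_{k+1,k+2\to k}$. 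You instead carry out a full reverse induction on $a+a'$ with base case $(r,r)$ (where monotonicity suffices), and in each inductive step decompose into the same three one-step transition types (your Cases A, B, C). What you gain is that you never have to invoke the somewhat opaque ``covered by Lemma~\ref{dlemma} already'' reduction; what you lose is compactness, since your Case C uses the induction hypothesis on $a+a'+2$ vertices with $k+1$, forcing a more elaborate bookkeeping of the $\lambda$ exponents.

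There is one point on which your proposal is actually \emph{more} careful than the paper as written. You correctly observe that the na\"ive bound $c_{a+1,a'\to k}\le C_{a+1,a'\to k}$ in Case A, integrated against $\sum_i|V(i)|^{-1}$ over $i\le i_{\max}$, loses a factor of roughly $D^\lambda\log^{1/(r-1)}N$ relative to $C_{a,a'\to k}$, and you repair this with the sharper factorization $c_{a+1,a'\to k}\le\binom{a'-1}{k}d_{a'}(v')D_{k+1\up a+1}$ (using the dynamic concentration of $d_{a'}$, which costs only $2(r-a-1)\lambda$ in the exponent rather than $(2r-2k-2)\lambda$). The paper's own drift bound $\sum jN_j\le C_{k+2,k+2\to k+1}+C_{k+2,k+1\to k}+C_{k+1,k+2\to k}$ uses exactly the loose bound you flag (note $C_{k+2,k+1\to k}$ carries $\lambda$-exponent $2r-2k-2$, the same as $C_{k+1,k+1\to k}$), and so the stated supermartingale deviation bound does not on its own close the loop---one must replace $C_{k+2,k+1\to k}$ by the tighter bound implicit in the paper's own reduction (namely $\binom{k+1}{k}d_{k+1}(v')D_{k+1\up k+2}=\tilde O(D^{(k+1)/(r-1)-\e+(2r-2k-4)\lambda})$) for the drift contribution to come in under $C_{k+1,k+1\to k}$. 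Your factorization is precisely that fix, applied uniformly. So your proof is not only valid but makes explicit an inequality that the paper's write-up glosses over. The Freedman estimate at the end is carried out in essentially the same way in both arguments.
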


\begin{proof} We first note that Lemma~\ref{dlemma} implies Lemma~\ref{clemma} except in the case $a=a'=k+1$. To see this, suppose $a' > k+1$ and let $v, v'$ be any two vertices. Then we have
$$c_{a,a' \to k}(v,v') \le d_a(v) \cdot \binom{a}{k} \cdot D_{k+1 \uparrow a'} \le D^{ \frac{a-1}{r-1}} \cdot 2^r \cdot D^{ \frac{a' - (k+1)}{r-1} - \e + 2(r-a')\lambda},$$ 
which gives the desired bound. 

 So we restrict our attention to the case $a=a'=k+1$.  We again proceed by induction, with the base case following immediately from the condition on $ \Gamma( \mc{H} )$. Note that $c_{k+1, k+1 \to k}(v, v',i)$ can increase in size only when the algorithm chooses a vertex contained in the intersection of a pair of edges from $c_{k+2, k+2 \to k+1}(v, v',i)$, or when the algorithm chooses the vertex not contained in the intersection of a pair of edges from $c_{k+2, k+1 \to k}(v, v',i)$ or $c_{k+1, k+2 \to k}(v, v',i)$. Also, on steps when $c_{k+1, k+1 \to k}(v, v',i)$ does increase, it increases by at most $2D_{2 \up k+2} + D_{2 \up k+1} \le 3D^{\frac{k}{r-1} - \e + (2r-2k-4) \lambda}$.

For $0 \le j \le 3D^{\frac{k}{r-1} - \e +(2r-2k-4) \lambda}$, let $N_j(i)$ be the number of vertices that, if chosen, would increase $c_{k+1, k+1 \to k}(v, v')$ by $j$. Note that $\sum N_j(i) = |V(i)|$ while 
$$\sum jN_j(i) \le C_{k+2, k+2 \to k+1}+ C_{k+2, k+1 \to k} + C_{k+1, k+2 \to k} \le 3 \cdot 2^r D^{\frac{k+1}{r-1}-\e + (2r-2k-2) \lambda}.$$
%
%
Then (w.r.t.\ the filtration $\mc{F}_i$)   $\Delta c_{k+1, k+1 \to k}(v, v')(i)$ is stochastically dominated by $\Delta X(i)$, where $X(i)$ is a variable such that $X(0)=0$ and 
$Pr(\D X = j) = \frac{N_j(i)}{ |V(i)| } $.

We apply Lemma~\ref{lem:Freedman} to bound $X$. 
We define $$Y(i):=  X(i) - \frac{3 \cdot 2^r}{N} D^{\frac{k+1}{r-1}-\e + (2r-2k-1) \lambda}\cdot i$$ and note that 
$Y(i)$ is a supermartingale. Now
\begin{multline*}
Var[\D Y] = Var[\D X] \le \E \left[(\D X)^2 \right] = \frac{1}{ |V(i)|} \sum{j^2 N_j(i)} \\
\le \frac{3D^{\frac{k}{r-1} - \e +(2r-2k-4) \lambda}}{N D^{- \lambda}} \sum{j N_j} 
\le \frac{9 \cdot 2^r}{N} D^{\frac{2k+1}{r-1} - 2\e + (4r-4k-5) \lambda}
\end{multline*}

Applying Lemma~\ref{lem:Freedman} with $$v = ( \log N) D^{\frac{2k}{r-1} - 2\e + (4r-4k-5) \lambda},$$
recalling (\ref{eq:notdense}),
 and $C= 3D^{\frac{k}{r-1} - \e + (2r-2k-4) \lambda}$ we have 
\[ P \left[Y(i) \ge D^{\frac{k}{r-1} - \e + \left(2r-2k-2.1 \right) \lambda} \right] 
\le \exp\left\{ -N^{\Om(1)} \right\}. \]

%


%

%

\end{proof}



\subsection{Dynamic concentration}

\label{sec:dynamic}

Consider the sequences
\begin{align*}
Z_{V} & := |V(i)| - Nq - ND^{-\d}f_v \\
Z_{\l}^+(v) & := d_\l^+(v) - s_\l^+ - D^{\frac{\l-1}{r-1}-\d}f_\l \;\;\;\;\;\;\;\;\;\; \text{ for }  2 \le \l \le r-1\\
Z_{\l}^-(v) &:= d_\l^-(v) - s_\l^- - D^{\frac{\l-1}{r-1}-\d}f_\l \;\;\;\;\;\;\;\;\;\; \text{ for } 2 \le \l \le r
\end{align*}
We establish the upper bound on $ V(i) $ in (\ref{eq:points}) by showing that $ Z_V < 0 $ for all 
$ i \le T $ with high probability.  Similarly, we establish the upper bounds on $ d_\l^\pm(v)$ in (\ref{eq:vertexdegree}) by showing that $ Z_\l^\pm(v) < 0 $ for all 
$ i \le T $ with high probability.   The lower bounds follow from the consideration of analogous random variables.
%
%

We begin by showing that the sequences $ Z_V$ and $ Z_\l^\pm $ are supermartingales.  We will see that each of these calculations imposes
a condition (inequality) on the collection of error functions $ \{ f_v \} \cup \{f_\ell \mid  \ell =2, \dots, r \}$ and their derivatives.  These differential equations are 
the {\em variation equations}.  We choose error functions that satisfy the variation equations after completing the expected change calculations.  The functions will be chosen so that all error functions evaluate to 
$1$ at $t=0$ and are increasing in $t$.
After we establish that the sequences are indeed supermartingales, we use the fact that they have initial values that are negative and relatively large in absolute value.  We complete the proof by applying martingale deviation inequalities to show that it is very unlikely for these supermartingales to ever be positive.

We start the martingale calculations with the variable $ Z_V$.  We treat this 
first case in some detail in an effort to illuminate our methods for the reader 
who is not familiar with these techniques.  
Let $ S_t = D^{\frac{1}{r-1}} /N $
and recall that $ t = i/S_t$.  (The quantity $S_t$ is sometimes called the {\em time scaling}.)
We write
\begin{multline*}
\Delta Z_V  \ =  \ \big( |V(i+1)| - |V(i)| \big)  \ -  \ N \big( q(t+1/S_t) - q(t) \big) \\ - \ ND^{-\d} \big( f_v( t + 1/S_t) - f_v(t) \big),
\end{multline*}
and make use of the estimates
\begin{align*}
q(t+ 1/S_t) - q(t) & = \frac{ q'(t) }{ S_t} + O \left(  \frac{q''}{ S_t^2} \right) \\
f_v(t+1/S_t) - f_v(t) & = \frac{ f'_v(t)}{ S_t} + O \left(   \frac{ f_v''}{ S_t^2} \right)
\end{align*}
where $ q'' $ and $ f_v'' $ are understood to be bounds on the second derivative that hold uniformly in the interval of interest.  We will see that the main terms of $ \Delta |V(i)| $ and $ N \left( q(t+1/S_t) - q(t) \right) $ cancel exactly.  The second order terms from $ \Delta |V(i)| $  
will be then be
balanced by the $ ND^{-\d} \left( f_v( t + 1/S_t) - f_v(t) \right) $ term.  We now proceed with
the explicit details.

 Note that $ q' = - s_2 D^{ - \frac{1}{ r-1}}$ and that $q''$ is $q$ times a polynomial in 
$t$, and recall
$ N = \Omega \left( D^{\frac{1}{r-1} + \epsilon} \right)$ (see (\ref{eq:notdense})).  We have
\begin{equation*}
\begin{split}
\E\left[\Delta Z_{V} \right] & =- \frac{1}{ |V(i)|}  \left[\sum_{v \in V(i)} (d_2(v) + 1) \right] + s_2 - D^{\frac{1}{r-1} - \d }f_v' \\
& \hskip2cm  +  \tilde{O} \left( \frac{D^{\frac{2}{r-1}  }}{N} q \right) +  O \left(\frac{D^{\frac{2}{r-1} - \d}}{N} f_v '' \right)\\
& \le D^{\frac{1}{r-1} - \d } \left[ 2f_2 - f_v ' \right] + O \left(D^{\frac{1}{r-1} - \d - \e} f_v '' +  D^{ \frac{1}{r-1} - \e + o(1)} \right)
\end{split}
\end{equation*}
whence we derive the first variation equation:
\begin{equation}\label{vareq00}
f_v ' > 2f_2.
\end{equation}
While a condition along the lines the of (\ref{vareq00}) suffices, we impose a stronger
condition in order to simplify our calculations.  We will choose
our error functions so that we have
\begin{equation}\label{vareq0}
f_v ' > 3f_2.
\end{equation}
So long as (\ref{vareq0}) holds and $ f_v'' = o( D^{\e} )$, the sequence 
$Z_V$ is a supermartingale (note that we apply the fact that $ f_2 >1$ here).  
We address the condition on $ f_v''$ below.

Now we turn to $ Z_\l^+$.  (The reader familiar with the original analysis of the $H$-free process \cite{BK} should note that there is no `creation fidelity' term here as, thanks to the convention that removes any edge that contains another edge,
selection of a vertex in an edge $e$ cannot close another vertex in the same edge.)  
For $2 \le \l \le r-1$ we have
\begin{equation*}
\begin{split}
\E\left[\D Z_{\l}^+(v) \right] & = \frac{\l d_{\l+1}(v)}{ |V(i)|} - \frac{\l s_{\l+1}}{Nq} - \frac{D^{\frac{\l}{r-1} - \d}}{N} f_\l '  \\
& \hskip2cm  +  \tilde{O} \left( \frac{D^{\frac{\l+1}{r-1}  }}{N^2}\right) + O \left(\frac{D^{\frac{\l+1}{r-1} - \d}}{N^2} f_\l ''\right)\\
& \le \frac{\l \left(s_{\l+1} + 2D^{\frac{\l}{r-1} - \d}f_{\l+1} \right)}{Nq-ND^{-\d}f_v} - \frac{\l s_{\l+1}}{Nq} - \frac{D^{\frac{\l}{r-1} - \d}}{N} f_\l ' \\
& \hskip2cm +  \tilde{O} \left( \frac{D^{\frac{\l}{r-1} -\e  }}{N}\right) + O \left( \frac{ D^{\frac{\l}{r-1} - \d - \e}}{N} f_\l '' \right) \\
& \le \frac{D^{\frac{\l}{r-1} - \d}}{N} \cdot \left[ 2 \l q^{-1}f_{\l+1} + \l \binom{r-1}{ \l}t^{r-\l-1}q^{\l-2}f_v - f_\l ' \right] \\
& \hskip2cm  +  \tilde{O} \left( \frac{D^{\frac{\l}{r-1} - \e }}{N}\right) +  O \left(  \frac{ D^{\frac{\l}{r-1} - \d - \e}}{N} f_\l '' \right)
\end{split}
\end{equation*}
(note on the second line we use $s_{\l+1} = s_{\l+1}^+ - s_{\l+1}^-$) whence we derive the following variation equations for $2 \le \l \le r-1$:
\begin{equation}\label{vareq1}
f_\l ' >  5 \l q^{-1}f_{\l+1}
\end{equation}
\begin{equation}\label{vareq2}
f_\l ' >  2\l \binom{r-1}{ \l}t^{r-\l-1}q^{\l-2}f_v
\end{equation}
So long as \eqref{vareq1}, \eqref{vareq2} hold, $ \d < \e$ and $ f_\l'' = o( D^{\e}) $ the sequence
 $Z_{\l}^+(v)$ is a supermartingale.

Finally, we consider $ Z_\ell^-$ for $ 2 \le \l \le r$.  The main term in the expected change 
of $ d_\l^-(v) $ comes from the selection of vertices $y$ for which there exists a vertex $x$ such that $ \{y,x\} \in {\mathcal H}(i) $ and there is 
an edge $e$ counted by $d_\l(v) $ such that $ x \in e$.  However, here we must also 
account for the convention of removing redundant edges.
For a fixed edge $e$ counted by $ d_\l(v)$, the selection of any vertex in 
the following sets results in the
removal of $e$ from this count:
\[ \{  y \in V(i) :  \exists A \subset e  \text{ such that } A \neq \{v\} \text{ and } A \cup \{y\} \in \mc{H}(i)  \}. \]
(Note that this is essentially restating the convention that we remove edges that contain other edges). 
Together, the sums \[\sum_{x \in e \setminus\{v\}} d_2(x) + \sum_{A \subseteq e, |A|\ge 2} d_{A \up |A|+1} \] count each $y$ with the property that the choice of $y$ causes the removal of $e$ from the count $ d_\l(v)$ at least once and at most $O(1)$ many times. 
The number of $y$ that are counted more than once in the first sum is at most $ \binom{ \l-1}{2} C_{2,2 \to 1}$. Therefore we get the following estimate: \[ \E\left[\D d_\l^-(v) \right] =  
\frac{1}{|V|} \left\{ \sum_{e \in d_\l(v)} \sum_{u \in e \setminus\{v\}} d_2(u) + O\left( d_\l \cdot \left[C_{2,2 \to 1} + \sum_{k = 2}^{\l -1} D_{k \up k+1} \right] \right) \right\} \]
(We note in passing that this estimate also takes into account the selection of vertices that
cause $v$ itself to be removed from the vertex set.  In other words, this estimate also takes freezing into account.)
 We have
\begin{equation*}
\begin{split}
\E\left[\D Z_\l^-(v) \right]
& = \frac{1}{|V(i)|} \left\{ \sum_{e \in d_\l(v)} \sum_{u \in e \setminus \{v\}} d_2(u)  + O
\left( d_\l \cdot \left[C_{2,2 \to 1} + \sum_{k = 2}^{\l -1} D_{k \up k+1} 
\right] \right) \right\}\\
& \;\;\;\;\;\;\;\;\;\; - \frac{(\l-1)s_\l \cdot s_2}{Nq} - \frac{D^{\frac{\l
}{r-1} - \d}}{N} f_\l ' + \tilde{O} \left( \frac{D^{\frac{\l+1}{r-1}  } }{N^2} \right)+   O \left(\frac{D^{\frac{\l+1}{r-1} - \d}}{N^2} f_\l 
'' \right)\\
&\le \frac{(\l-1)\left( s_\l + 2D^{\frac{\l-1}{r-1} - \d}f_\l \right)\left( s
_2 + 2D^{\frac{1}{r-1} - \d}f_2 \right)}{Nq-ND^{-\d}f_v}  - \frac{(\l-1)s_\l 
\cdot s_2}{Nq} - \frac{D^{\frac{\l}{r-1} - \d}}{N} f_\l ' \\
&\;\;\;\;\;\;\;\;\;\;+ \tilde{O} \left( \frac{D^{\frac{\l+1}{r-1}  } }{N^2} \right) + O \left( \frac{D^{\frac{\l}{r-1} - \frac{\e}{2}} }{N} q^{\l-2}   + \frac{D^{\frac{\l+1}{r-1} - \d}}{N^2} f_\l ''  \right)\\
& \le \frac{D^{\frac{\l}{r-1} - \d}}{N} \cdot \left[ (2 + o(1))(\l-1)\binom{r
-1}{ \l -1}t^{r-\l}q^{\l -2}f_2 + 2(\l -1)(r-1)t^{r-2}f_{\l} \right.\\
& 
\;\;\;\;\;\;\;\;\;\;\;\;\;\;\;\;\;\;\;\;\;\;\;\;\;\;\;\;\;\;\;\;\;\;\;\;\;\;  
\ \ \ \ \ \ \ \ \ \
\left. + (\l -1)(r-1)\binom{r-1}{ \l -1}t^{2r- \l -2}q^{\l-2}f_v  - f_\l ' 
\right]\\
&\;\;\;\;\;\;\;\;\;\;+  \tilde{O} \left( \frac{D^{\frac{\l}{r-1} -\e} }{N} \right)  + O \left( \frac{D^{\frac{\l}{r-1} - \frac{\e}{2}}
}{N} q^{\l-2}   + \frac{D^{\frac{\l}{r-1} - \d - \e}}{N} f_\l '' \right)
\end{split}
\end{equation*}
whence we derive the following variation equations for $2 \le \l \le r$:
\begin{equation}\label{vareq3}
f_\l ' >  7(\l-1)\binom{r-1}{ \l -1}t^{r-\l}q^{\l -2}f_2
\end{equation}
\begin{equation}\label{vareq4}
f_\l ' >  6(\l -1)(r-1)t^{r-2}f_{\l}
\end{equation}
\begin{equation}\label{vareq5}
 f_\l ' >  3(\l -1)(r-1)\binom{r-1}{ \l -1}t^{2r- \l -2}q^{\l-2}f_v
\end{equation}
So long as \eqref{vareq3}, \eqref{vareq4}, \eqref{vareq5} hold and $ \e/2 > \d $ and $ f_\l'' = o(D^\e)$ the sequence  $Z_{\l}^-(v)$ is a supermartingale.  (We will see below that $ q^\l f_2 > 1 $.)

We satisfy the variation equations \eqref{vareq0}, \eqref{vareq1}, \eqref{vareq2}, \eqref{vareq3}, \eqref{vareq4}, \eqref{vareq5} by setting the error functions to have the form $$f_\l =  \left( 1 + t^{r - \l + 2} \right)   \cdot \exp \left(\a t + \b t^{r-1}\right) \cdot q^\l $$  $$f_v =  \left( 1 + t^{2} \right)  \cdot \exp \left(\a t + \b t^{r-1}\right) \cdot q^{2}$$ for some constants $\a$ and $ \b$ depending only on $r$.  Note that (dropping some terms) we have 
$$f_\l ' \ge \left[ \a   + (\b-\l)(r-1) t^{2r-\l}  \right]  \cdot \exp \left(\a t + \b t^{r-1}\right) \cdot q^\l$$
$$f_v ' \ge \left[ \a  + (\b-2)(r-1) t^{r} \right] \cdot \exp \left(\a t + \b t^{r-1}\right) \cdot q^2.$$
Note that for this choice of functions, all variation equations have the property that both sides of the equation
have the same exponential term.  It remains to compare the polynomial terms; in each case it is clear that we get the
desired inequality by choosing $ \a $ and $ \beta $ to be sufficiently large (as functions of $r$).  We get the desired conditions on second derivatives by choosing $\z$ sufficiently small (recall that we are free to choose $ \z$ arbitrarily small).

We complete the proof by applying martingale variation inequalities to prove that $ Z_V $ and $ Z^\pm_\ell $ remain negative with high probability.  We will apply the following lemmas (which both follow from Hoeffding \cite{Hoef}):

\begin{lemma}\label{symmetricAH} Let $X_i$ be a supermartingale such that 
$|\Delta X| \leq c_i$ for all $i$. Then $$ \P(X_m - X_0 > d) \leq  \exp\left(-\frac{d^2}{2 \displaystyle\sum_{i\leq m} c_i^2 }\right)$$ \end{lemma}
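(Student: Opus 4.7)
The plan is to apply the standard exponential moment method (the Chernoff/Markov technique adapted to martingales). For a parameter $\lambda > 0$ to be optimized later, I would define $Y_i := \exp(\lambda(X_i - X_0))$ and show that $Y_i \cdot \exp\!\left(-\lambda^2 \sum_{j \le i} c_j^2 / 2\right)$ is itself a supermartingale. After factoring out the $\mc{F}_{i-1}$-measurable quantity $Y_{i-1}$, this reduces to the one-step estimate
\[
\E\bigl[\exp(\lambda \Delta X_i) \mid \mc{F}_{i-1}\bigr] \le \exp\!\left(\lambda^2 c_i^2 / 2\right).
\]

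The key input here is Hoeffding's lemma: any random variable $Z$ with $\E[Z] \le 0$ and $Z \in [-c, c]$ satisfies $\E[e^{\lambda Z}] \le \exp(\lambda^2 c^2 / 2)$. Conditional on $\mc{F}_{i-1}$, the increment $\Delta X_i$ has conditional mean at most $0$ (by the supermartingale hypothesis) and $|\Delta X_i| \le c_i$ (by assumption), so the lemma applies with $c = c_i$. Hoeffding's lemma itself is proved by convexity: for $z \in [-c,c]$ one has $e^{\lambda z} \le \frac{c - z}{2c}\, e^{-\lambda c} + \frac{c + z}{2c}\, e^{\lambda c}$; taking expectations and analyzing the resulting expression as a function of $\lambda c$ (for instance by bounding the second derivative of its logarithm) gives the desired Gaussian-type exponential bound.

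Iterating the one-step estimate via the tower property yields
\[
\E\bigl[\exp\!\left(\lambda(X_m - X_0)\right)\bigr] \le \exp\!\left(\frac{\lambda^2}{2} \sum_{i \le m} c_i^2\right).
\]
Then Markov's inequality gives
\[
\P(X_m - X_0 > d) \le e^{-\lambda d}\, \E\bigl[\exp(\lambda(X_m - X_0))\bigr] \le \exp\!\left(-\lambda d + \frac{\lambda^2}{2} \sum_{i \le m} c_i^2 \right),
\]
and optimizing by setting $\lambda = d / \sum_{i \le m} c_i^2$ recovers the stated bound exactly.

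The only substantive obstacle is Hoeffding's lemma itself; once that is in hand (either derived from convexity as sketched above or cited as a black box from Hoeffding's paper, exactly as the surrounding discussion suggests), everything else is the routine Chernoff/Markov computation combined with a tower-property induction. I would likely simply cite Hoeffding for the one-step exponential bound and devote the write-up to the martingale reduction, since that is the only part where the supermartingale hypothesis (as opposed to the martingale hypothesis) plays a role.
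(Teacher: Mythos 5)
Your proof is correct and is precisely the argument underlying the citation to Hoeffding that the paper gives in lieu of a proof: the exponential‑moment reduction via Hoeffding's lemma for bounded, conditionally mean‑zero (here mean‑nonpositive) increments, followed by the tower property, Markov's inequality, and optimization of $\lambda$. The paper does not spell this out, but your write‑up fills in exactly the standard Azuma--Hoeffding derivation that the reference supplies.
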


\begin{lemma}\label{asymmetricAH} Let $X_i$ be a supermartingale such that $-N \leq \Delta X \leq \eta$ for all $i$, for some $\eta < \frac{N}{10}$. Then for any $d < \eta m$ 
we have $$ \P(X_m -X_0 > d) \leq \exp \left(- \frac{d^2}{3 m \eta N }\right)$$
\end{lemma}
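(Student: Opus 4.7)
The plan is to prove Lemma~\ref{asymmetricAH} by the standard exponential supermartingale (Chernoff/Hoeffding) method, but exploiting the \emph{asymmetry} of the increment bounds in the MGF estimate. Because we only need a positive-tail bound and the increments satisfy $-N \le \Delta X \le \eta$ with $\eta \ll N$, we should not use a crude symmetric Azuma bound (which would give a denominator of order $m N^2$); instead we exploit the fact that, for a supermartingale, the MGF at parameter $s>0$ is governed by a two-point extremal distribution supported on $\{-N,\eta\}$.

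First I would fix $s>0$ (to be chosen later) and, for each $i$, use the chord bound for the convex function $x \mapsto e^{sx}$ on $[-N,\eta]$:
\[
e^{sx} \;\le\; \frac{\eta - x}{\eta + N}\, e^{-sN} \;+\; \frac{x+N}{\eta+N}\, e^{s\eta}.
\]
Taking conditional expectation given $\mathcal{F}_{i-1}$ and using $\E[\Delta X_i \mid \mathcal{F}_{i-1}] \le 0$ (the supermartingale hypothesis) together with the positivity of the coefficient $(e^{s\eta}-e^{-sN})/(\eta+N)$ for $s>0$, we get
\[
\E\!\left[e^{s\Delta X_i} \mid \mathcal{F}_{i-1}\right] \;\le\; p\, e^{s\eta} + q\, e^{-sN}, \qquad p := \tfrac{N}{N+\eta},\ q := \tfrac{\eta}{N+\eta}.
\]
This reduces the problem to bounding the MGF of a single mean-zero two-point random variable.

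Next I would show that, provided $0 < s \le 1/N$, we have
\[
\log\!\bigl(p\, e^{s\eta} + q\, e^{-sN}\bigr) \;\le\; \tfrac{3}{2}\, s^2\, \eta N.
\]
The idea is to Taylor-expand the two exponentials to second order and bound the tail. The quadratic term in the expansion contributes $\tfrac12 s^2(p\eta^2 + qN^2) = \tfrac12 s^2 \eta N$ (by direct computation, using $p\eta^2 + qN^2 = \eta N$), and the hypothesis $\eta < N/10$ together with $sN \le 1$ keeps the higher-order error under control, yielding the constant $\tfrac{3}{2}$ (which is the source of the ``$3$'' in the final denominator). Iterating through $i=1,\dots,m$ then gives
\[
\E\!\left[e^{s(X_m-X_0)}\right] \;\le\; \exp\!\left(\tfrac{3}{2}\, s^2 m\, \eta N\right).
\]

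Finally, Markov's inequality applied to $e^{s(X_m-X_0)}$ yields $\P(X_m - X_0 > d) \le \exp(-sd + \tfrac{3}{2}s^2 m \eta N)$, and optimizing in $s$ gives $s = d/(3 m \eta N)$, producing the claimed bound $\exp(-d^2/(3 m \eta N))$. The hypothesis $d < \eta m$ is exactly what is needed to ensure that this optimal $s$ satisfies $s < 1/N$ (since $d/(3m\eta N) < 1/(3N) < 1/N$), making the MGF estimate in the previous step valid. The main technical obstacle is the MGF bound in the second step: one must be careful that the asymmetry does not introduce lower-order cross terms which spoil the factor $\eta N$ (rather than $N^2$) in the exponent, and the constant $3$ versus $2$ reflects precisely the slack needed to absorb the cubic and higher terms under $\eta \le N/10$.
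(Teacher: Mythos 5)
The paper gives no proof of this lemma at all — it simply asserts that both Lemma~\ref{symmetricAH} and Lemma~\ref{asymmetricAH} ``follow from Hoeffding'' — and your argument is a correct unpacking of exactly what that citation means: the exponential supermartingale method with the chord bound for the conditional moment generating function, i.e.\ Hoeffding's lemma in its asymmetric (Bernstein/Bennett) form. The structural steps are all right: the chord bound for $e^{sx}$ on $[-N,\eta]$, the observation that the supermartingale property kills the $x$-term because the chord has positive slope, the reduction to the two-point MGF $pe^{s\eta}+qe^{-sN}$, and the use of $d<\eta m$ to place the optimizing $s$ in $(0,1/N]$ where the Taylor estimate is valid.

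There is, however, a factor-of-two slip in the final optimization. If $\log\bigl(pe^{s\eta}+qe^{-sN}\bigr)\le c\,s^2\eta N$ per step, so $\E\!\left[e^{s(X_m-X_0)}\right]\le e^{c s^2 m\eta N}$, then minimizing $-sd+cs^2m\eta N$ gives $s^*=d/(2cm\eta N)$ and the tail bound $\exp\!\bigl(-d^2/(4cm\eta N)\bigr)$. Your claimed $c=\tfrac32$ therefore yields $\exp\!\bigl(-d^2/(6m\eta N)\bigr)$, not the stated $\exp\!\bigl(-d^2/(3m\eta N)\bigr)$; to land on the latter you need $c\le\tfrac34$. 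Fortunately the MGF estimate actually delivers this with room to spare: for $k\ge 2$ one has $p\eta^k+qN^k=\eta N\cdot\tfrac{\eta^{k-1}+N^{k-1}}{N+\eta}\le\eta N^{k-1}$ whenever $\eta<N$ (the odd-$k$ terms carry a helpful minus sign), hence
\[
pe^{s\eta}+qe^{-sN}\;\le\; 1+\frac{\eta}{N}\bigl(e^{sN}-1-sN\bigr)\;\le\; 1+(e-2)\,s^2\eta N \qquad\text{for } sN\le 1,
\]
so $c=e-2\approx 0.718<\tfrac34$ and $4c>3$; the claimed bound follows. (Note this only uses $\eta<N$; the stronger hypothesis $\eta<N/10$ is not needed for this particular MGF estimate.) With that correction the proof is complete and is precisely the route the paper's citation intends.
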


For our upper bound on $|V(i)|$ we apply Lemma~\ref{symmetricAH} to the supermartingale $Z_{V}(i)$. Note that if the vertex $v$ is inserted to the independent set at step $i$ then $\D V = -1-d_2(v) \in s_2 \pm D^{\frac{1}{r-1}-\d}f_2$.  As $q' = - D^{- \frac{1}{r-1}} s_2$  we have 
\begin{multline*} 
\D Z_{V} = O\left( D^{\frac{1}{r-1}-\d} (f_2+f_v ') \right)  +  O \left( \frac{ D^{ \frac{2}{r-1} - \d}}{N} f_v'' \right) + \tilde{O} \left( \frac{ D^{\frac{2}{r-1}}}{N} \right) \\ = O\left( D^{\frac{1}{r-1}-\d} (f_2+f_v ') \right) .
\end{multline*}  
As we have $Z_V(0) = - ND^{-\d} $,  the probability that $Z_V$ is positive at step $T$ is at most
\begin{multline*}
\exp\left\{- \tilde\Om \left( \frac{\left(ND^{-\d} \right)^2}{ND^{-\frac{1}{r-1}} \cdot \left[D^{\frac{1}{r-1}-\d} (f_2+f_v')\right]^2} \right) \right\} 
\le \exp\left\{ - \tilde\Om \left(   D^{\e} (f_2+f_v')^{-2}  \right) \right\} \\
\le \exp\left\{ -N^{\Om(1)} \right\},
\end{multline*}
so long as $ \z$ is sufficiently small. Note that we have used the notation $\tilde{\Omega}(\cdot)$: If $f$ and $g$ are functions of $N$ such that $f$ is bounded above by $g$ times some poly-logarithmic factor we write $ f = \tilde{\Omega} (g) $. Also note that $D > N^\epsilon$ (see (\ref{eq:notdense})) is used to get the last expression above.

For our bound on $d_\l^+(v)$ we apply Lemma~\ref{asymmetricAH} to the supermartingale $Z_{\l}^+(v)$. Note that we have  $$ \D Z_{\l}^+(v)  < D_{2 \up \l+1} \le  D^{\frac{\l-1}{r-1} - \frac{\e}{2}} $$ and note that the upper bound is not affected by the convention of removing edges containing other edges, since that would only decrease the number of new edges. For a lower bound we have $$\D Z_{\l}^+(v) > -(s_\l^+)'\cdot \frac{D^\frac{1}{r-1}}{N} + O\of{(s_\l^+)''\cdot \frac{D^\frac{2}{r-1}}{N^2} + f_\l'\cdot \frac{D^\frac{1}{r-1}}{N}} > -O\left( \frac{D^\frac{\l}{r-1}}{N} \right).$$    We have $Z_{\l}^+(0) = - D^{\frac{\l-1}{r-1}-\d}$, and the hypotheses of Lemma~\ref{asymmetricAH} hold since $\frac{D^\frac{\l}{r-1}}{N} = o(D^{\frac{\l-1}{r-1} - \frac{\e}{2}})$ and $D^{\frac{\l-1}{r-1}-\d} =o(\frac{D^\frac{\l}{r-1}}{N} \cdot i_{max} ) $. So the probability that $Z_{\l}^+(v)$ is positive at some step $i \leq T$ is at most
\begin{multline*}
\exp\left\{-\tilde\Om \left( \frac{\left(D^{\frac{\l-1}{r-1}-\d} \right)^2}{ND^{-\frac{1}{r-1}} \cdot \frac{1}{N}D^\frac{\l}{r-1} \cdot D^{\frac{\l-1}{r-1} - \e/2} } \right) \right\} \\ \le \exp \left\{ - \tilde\Om \left( D^{\frac{\e}{2}-2\d}\right) \right\} \le \exp\left\{ -N^{\Om(1)} \right\}.
\end{multline*}
Note that we have use $ \d < \e/4 $ to obtain the last expression.

For our bound on $d_\l^-(v)$ we apply Lemma~\ref{asymmetricAH} to the supermartingale $Z_{\l}^-$. Note that $$-O\left( \frac{D^\frac{\l}{r-1}}{N} \right)< \D Z_{\l}^-(v)  < \displaystyle  O \left( \sum_{1 \le k \le \l-1} C_{\l, k+1 \to k} \right) = O\left( D^{\frac{\l-1}{r-1} - \frac{\e}{2}} \right) $$ (and note here that the upper bound includes edges lost because they contain other edges). Thus, the rest of the calculation is the same as it was for $d_\l^+(v)$.


%


%


\section{Subgraph counts: Proof of Theorem~\ref{theory:count}}

\label{sec:count}

%


%


Here we apply the observation, due to Wolfovitz~\cite{Wz3}, that the classical second moment 
argument for subgraph counts can be applied in the context of the random greedy independent set process.
\begin{lemma}
\label{lem:wolf}
Fix a constant $L$ and suppose  $\{v_1 \ldots v_{L} \} \subset V$ does not contain 
an edge of $\mc{H}$. Then for all $ j \le i_{\rm max} $ we have
\[ \P \left( \{v_1 \ldots v_{L} \} \subset I(j) \right) = (j/N)^L \cdot (1+o(1)). \]
\end{lemma}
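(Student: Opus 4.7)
The plan is to compute $\P(A \subset I(j))$, where $A = \{v_1, \ldots, v_L\}$, by summing over the possible selection histories of the vertices of $A$ and invoking the concentration results of Theorem~\ref{theory:main}.

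\emph{Decomposition.} By the random greedy rule, conditional on $\mc{F}_{i-1}$ the vertex selected at step $i$ is uniform on $V(i-1)$. Iterating, for any ordering $\sigma \in S_L$ and any times $1 \le i_1 < \cdots < i_L \le j$, the probability that $v_{\sigma(k)}$ is chosen at step $i_k$ for each $k$ equals $\E\left[\prod_{k=1}^{L} \mathbf{1}[v_{\sigma(k)} \in V(i_k-1)]/|V(i_k-1)|\right]$. Summing over all $\sigma$ and $\vec i$,
\[ \P(A \subset I(j)) = \sum_{\sigma \in S_L} \sum_{i_1 < \cdots < i_L \le j} \E\left[\prod_{k=1}^{L} \frac{\mathbf{1}[v_{\sigma(k)} \in V(i_k-1)]}{|V(i_k-1)|}\right]. \]

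\emph{Joint survival.} The key auxiliary claim is that for any $A' \subseteq A$ and $i \le i_{\max}$, $\P(A' \subset V(i)) = q(t_i)^{|A'|}(1+o(1))$, where $t_i = D^{1/(r-1)} i/N$. I would establish this via the one-step expected change of $\mathbf{1}[A' \subset V(i)]$: conditional on $A' \subset V(i)$, the probability of removing some vertex of $A'$ at step $i+1$ equals $|\bigcup_{v \in A'} (\{v\} \cup N_i(v))|/|V(i)|$, where $N_i(v) = \{u : \{u,v\} \in \mc{H}(i)\}$. Using the dynamic concentration $d_2(v;i) = s_2(t_i)(1+o(1))$ from Theorem~\ref{theory:main} and a bound on the pairwise overlaps $|N_i(v) \cap N_i(v')|$ for $v, v' \in A'$ (obtained from the codegree condition $\Gamma(\mc{H}) < D^{1-\epsilon}$ and the crude bounds of Section~\ref{sec:crude}), this fraction equals $|A'| s_2(t_i)/(Nq(t_i))(1+o(1))$, and the induced discrete differential equation integrates to $q(t)^{|A'|}$.

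\emph{Combining.} Given $|V(i)| = Nq(t_i)(1+o(1))$ from Theorem~\ref{theory:main} and the joint survival claim, I would expand each expectation in the decomposition by conditioning on the history at the consecutive times $i_1-1, i_2-1, \ldots, i_L-1$. The conditional survival of the $L-k$ not-yet-selected vertices of $A$ between steps $i_k$ and $i_{k+1}$ contributes a factor $(q(t_{i_{k+1}})/q(t_{i_k}))^{L-k}(1+o(1))$, and the telescoping product of these factors evaluates to $\prod_{k=1}^{L} q(t_{i_k})(1+o(1))$. Each term in the sum is therefore $N^{-L}(1+o(1))$, and summing over the $L!$ orderings and the $\binom{j}{L}$ time sequences gives $\P(A \subset I(j)) = L! \binom{j}{L} N^{-L}(1+o(1)) = (j/N)^L(1+o(1))$.

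The main obstacle is the joint survival claim: two vertices $v, v' \in A$ can share common 2-neighbors in the evolving hypergraph $\mc{H}(i)$ (arising from shrunken $r$-edges containing both, or from distinct $r$-edges passing through both), creating positive correlation between their survivals. Bounding the resulting pairwise overlap so that individual survival factors multiply relies crucially on the codegree hypothesis $\Gamma(\mc{H}) < D^{1-\epsilon}$ and on the crude codegree bounds $C_{a, a' \to k}$ proved in Section~\ref{sec:crude}.
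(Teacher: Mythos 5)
Your proposal takes essentially the same route as the paper: decompose $\P(A \subset I(j))$ over the $L!$ orderings of $A$ and the $\binom{j}{L}$ choices of selection times, estimate each step's contribution from the dynamically tracked quantities ($|V(i)|$, $d_2(v)$, the crude bounds $C_{2,2\to 1}$ and $D_{m\up m+1}$), and observe that the resulting product telescopes to $(1+o(1))N^{-L}$. Two adjustments are in order, though. First, your displayed identity
$\P(A \subset I(j)) = \sum_{\sigma}\sum_{\vec{\imath}}\,\E\bigl[\prod_k \mathbf{1}[v_{\sigma(k)}\in V(i_k-1)]/|V(i_k-1)|\bigr]$
is not actually an equality: that expectation of a product of conditional densities is not the probability of the joint event, because $|V(i_{k+1}-1)|$ and the availability of $v_{\sigma(k+1)}$ depend on which vertex was selected at step $i_k$, not merely on its availability. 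The correct formalisation is the chain rule along the filtration the paper uses: writing $\mc{E}_i$ for the event that the first $i$ steps are compatible with the prescribed history \emph{and} $T>i$, one has $\P(\mc{E}_{i_L}) = \P(\mc{E}_1)\prod_i \P(\mc{E}_i\mid\mc{E}_{i-1})$ plus an error $\P(T\le i_L)$. Your ``Combining'' paragraph describes exactly this nested conditioning, so this is a misstatement rather than a conceptual gap, but the stopping time must be carried explicitly — conditioning on long prefixes of the algorithm is precisely the situation in which the unstopped concentration estimates could fail, and including $T$ in each $\mc{E}_i$ is what keeps the per-step estimates valid. Second, the intermediate ``joint survival claim'' $\P(A'\subset V(i))=q(t)^{|A'|}(1+o(1))$ is more machinery than the lemma needs: the paper estimates each $\P(\mc{E}_i\mid\mc{E}_{i-1})$ directly (as $(1+o(1))/(Nq)$ on selection steps and $1-\tfrac{1}{|V(i-1)|}[\sum_m d_2(u_m)+O(C_{2,2\to1}+\sum D_{m\up m+1})]$ in between) and never needs the unconditional marginal law of $V(i)$; proving your claim by integrating a one-step drift would just reproduce the same telescoping computation in a roundabout way. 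Your identification of the codegree condition and the $C_{2,2\to 1}$ bound as the tool that controls shared $2$-neighbourhoods among the vertices of $A$ is exactly right and matches the paper.
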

\begin{proof} Fix a permutation of this set of vertices, say $u_1 \ldots u_{L}$ after relabeling, and a list of 
steps of the algorithm $i_1< \dotsm < i_{L} \le j$.  Let  $\mc{E}$ be the event that each $u_k$ is chosen on step $i_k$ for $ k =1, \dots, L$.  Note that the event $ \mc{E}$ requires that vertex $ u_k$ remains in $V(i)$ until step $i_k-1$, and, in order to achieve this condition, the set $\{v_1 \ldots v_L\}$ can never contain an edge of $ \mc{H}(i) $.

Let $ \mc{E}_i $ be the event that  $ T > i $ and 
the first $i$ steps of the algorithm are compatible with $ \mc{E} $.  Then
we write
\[ \P( \mc{E}_1) \prod_{i=2}^{i_L} \P \left( \mc{E}_i \mid \mc{E}_{i-1} \right) \le  \P( \mc{E} ) \le  \P( \mc{E}_1) \prod_{j=2}^{i_L} \P \left( \mc{E}_i \mid \mc{E}_{i-1} \right) + \P( T \le i_L). \]
If $i = i_k$ then, conditioning on the first $ i-1$ steps of the algorithm and the event $ \mc{E}_{i-1}$, we have 
$ \P( \mc{E}_i ) = \frac{1}{|V(i-1)|} $, unless the selection of $u_k$ triggers the stopping time $T$. Also recall that $\P( T \le i_L)\le \exp \left\{ -N^{\Om(1)}\right\}$, and since $ \P( \mc{E}_i ) =  N^{-O(1)}$ we have $\P( T =i \mid \mc{E}_{i-1})\le \exp \left\{ -N^{\Om(1)}\right\}$ for every choice of $i, u_1 \ldots u_L$. Thus, we can write
\[ \P( \mc{E}_i \mid \mc{E}_{i-1} ) = \frac{1}{ Nq( 1 \pm D^{ -\d/2})} \pm \exp \left\{ -N^{\Om(1)} \right\}
= \frac{ (1+o(1))}{ Nq }. \]

If $i_k < i < i_{k+1}$, then $ \P ( \mc{E}_i \mid \mc{E}_{i-1})$  is 
the probability that the set of vertices $\{u_{k+1}, \ldots, u_L\}$ 
all stay open and do not obtain an edge and we do not trigger the stopping time $T$. That is, we have
\begin{eqnarray*}
\P( \mc{E}_i \mid \mc{E}_{i-1}) & = & 1-  \frac{1}{Q(i-1)} \left[ \sum_{m=k+1}^L d_2(u_{w}) + O \left(C_{2,2 \to 1} + \displaystyle \sum_{m \ge 2} D_{m \up m+1} \right) \right] \pm \exp \left\{ -N^{\Om(1)}\right\}\\
& = &  1- \frac{(L+1 - k)(r-1)D^{\frac{1}{r-1}}t^{r-2}q \pm O\left(D^{\frac{1}{r-1} - \frac{\e}{2}} \right)}{Nq \cdot \left(1 \pm D^{ - \d/2 } \right)}\pm \exp \left\{ -N^{\Om(1)}\right\}\\
& = & 1-  \frac{ (L+1 - k)(r-1) D^{\frac{1}{r-1}}t^{r-2}}{N} \pm O \left(D^{\frac{1}{r-1} - \frac{\d}{4}}  N^{-1} \right) 
\end{eqnarray*}
where on the last line we use $\zeta \ll \d \ll \e$ and \eqref{eq:minq}. Thus, setting $i_0=0$ we have
\begin{eqnarray*}
\P(\mc{E})
& = & \prod_{k=1}^{L} \left[ \prod_{i=i_{k-1}}^{i_k-2} 1-  \frac{ (L+1 - k)(r-1) D^{\frac{1}{r-1}}t^{r-2}}{N} \pm O \left(D^{\frac{1}{r-1} - \frac{\d}{4}}  N^{-1} \right) \right] \frac{1 + o(1)}{Nq(t(i_k-1)) }\\
& = & (1+o(1)) \exp \left\{  - \frac{ (r-1)D^{\frac{1}{r-1}} }{N}\sum_{k=1}^{L}(L+1 - k) \sum_{i=i_{k-1}}^{i_k-2}t^{r-2}   \right\}\prod_{k=1}^{L} \frac{1 } {Nq(t(i_k-1))}\\
& = & (1+o(1)) \exp \left\{  -\frac{ D^{\frac{1}{r-1}} }{N}\sum_{k=1}^{L} \sum_{i=0}^{i_k-2} (r-1) t^{r-2}  \right\}\prod_{k=1}^{L} \frac{1 } {Nq(t(i_k-1))}\\
& = & (1+o(1)) \exp \left\{ - \sum_{k=1}^{L} t(i_{k}-1)^{r-1}  + O\left(\frac{t(i_{max})^{r-2} D^\frac{1}{r-1}}{N} \right) \right\}\prod_{k=1}^{L} \frac{1 } {Nq(t(i_k-1))}\\
& = & (1+o(1)) \frac{1}{N^L}
\end{eqnarray*}
where on the last line we have used $q(t) = \exp\{-t^{r-1}\} $.

We complete the proof by summing over all possible choices of the indices $i_k$.
\end{proof}

%


%




Now by linearity of expectation, we have $E[X_\mc{G}] = |\mc{G}| p^{s} \cdot (1+o(1))$. Now we will do a second moment calculation to show that $X_\mc{G}$ is concentrated around its mean. It suffices to show that 
$ E[X_\mc{G}^2] \le E[X_\mc{G}]^2 \cdot(1+o(1)) $.


%


We have
$$E[X_\mc{G}^2] = \sum_{e, e' \in \mc{G}} \P (e \cup e' \subseteq I(i) ).$$
Thus, by an application of the Lemma, we have
\begin{eqnarray*}
E[X_\mc{G}^2] & = & \sum_{e \in \mc{G}} \sum_{a=0}^s | \{ e' \in \mc{G} : |e \cap e'|=a \} | p^{2s-a} (1+o(1)) \\
& \le & |\mc{G}|^2 p^{2s}(1+o(1)) +  O \left( |\mc{G}| \sum_{a=1}^s \Delta_a( \mc{G} ) p^{2s-a} \right) \\
& = & (1+o(1)) E[ X_\mc{G} ]^2.
\end{eqnarray*}
To help see the second line, note that the $1+o(1)$ from the line above is unaffected by the sums since it is uniform over the terms. To see the last line note that $|\mc{G}| \Delta_a( \mc{G} ) p^{2s-a} = o(|\mc{G}|^2 p^{2s})$ for each $a$. 


%







\noindent

{\bf Acknowledgment.}  The authors thank Jacob Fox, Dhruv Mubayi, Mike Picollelli and Jozsef Balogh for
helpful conversations.  We also thank the anonymous referees for many helpful comments. We also thank Asaf Ferber, Dhruv Mubayi,
Gwen McKinley, Wojtek Samotij, Nicholas Spanier and Lutz Warnke for identifying
omissions in earlier versions of this work.

\end{document}